\newtheorem{thm}{Theorem}[section]
\newtheorem{prop}[thm]{Proposition}
\newtheorem{lem}[thm]{Lemma}
\theoremstyle{remark}
\newtheorem{rk}{Remark}
\numberwithin{equation}{section}
\newcommand{\al}{\alpha}
\renewcommand{\bar}{\overline}
\renewcommand{\r}{\rho}
\newcommand{\inp}[2]{\langle #1,#2 \rangle}
\newcommand{\Del}[1]{}
\newcommand{\s}{\sigma}
\newcommand{\na}{\nabla}
\newcommand{\supp}{\operatorname{supp}}
\newcommand{\De}{\Delta}
\newcommand{\dist}{\text{ dist }}
\newcommand \mix[2] {L^{#1}_{t}L^{#2}_{x}}
\newcommand \nmix[2] {\|_{L^{#1}_{t}L^{#2}_{x}}}
\newcommand \rnmix[2] {\|_{L^{#1}_{t}\mathfrak L^{#2}_{r}}}
\newcommand \hmix[1] {\|_{L_r^{2}H_\s^{#1}}}
\newcommand \srmix[2] {\|_{L_t^{#1}\mathfrak L^{#2}_rL_\s^2}}
\newcommand \rn {\|_{L^{q}_{t}\mathfrak L^{p}_{r}}}
\newcommand \prn {\|_{L^{\widetilde q'}_{t}\mathfrak L^{\widetilde p'}_{r}}}
\newcommand \pprn {\|_{L^{q'}_{t}\mathfrak L^{ p'}_{r}}}
\newcommand \nqp[2] {\|_{L^{#1}_{t} \mathfrak L^{#2}_{r}}}
\newcommand \wtq {\widetilde q'}
\newcommand \w{\widetilde}
\newcommand \m{\mathcal}
\newcommand \su{ e^{-it \omega(|\nabla|)}  }
\begin{document}
\title[Strichartz estimates in spherical coordinates]{Strichartz estimates  in spherical coordinates}
\author{Yonggeun Cho}
\address{Department of Mathematics, and Institute of Pure and Applied Mathematics, Chonbuk National University, Jeonju 561-756, Republic of Korea}
\email{changocho@jbnu.ac.kr}
\author{Sanghyuk Lee}
\address{Department of Mathematical Sciences, Seoul National University, Seoul 151-747, Republic of Korea}
\email{shklee@snu.ac.kr}
\subjclass[2000]{42B37, 35Q40}
 \keywords{Dispersive equations,
Strichartz estimates, angular regularity.}
\begin{abstract}
In this paper we study Strichartz estimates for dispersive equations
which are  defined by radially symmetric pseudo-differential
operators, and of which  initial data belongs to the spaces of
Sobolev type defined in spherical coordinates. We obtain the space
time estimates on the best possible range including the endpoint
cases.
\end{abstract}

\maketitle

\section{Introduction}
In this paper we consider the Cauchy problem of linear dispersive
equations:
\begin{align}\label{linear eqn}
iu_t - \omega(|\nabla|) u = 0\;\;\mbox{in}\;\;\mathbb{R}^{1+n},\quad
u(0) = \varphi\;\;\mbox{in}\;\; \mathbb{R}^n, \ n \ge 2
\end{align}
where
$\omega(|\nabla|)$ is the pseudo-differential operator of which
multiplier is $\omega(|\xi|)$.
 Typical examples of $\omega$ are $\rho^a\, (0 < a \neq 1)$,
$\sqrt{1 + \rho^2}$, $\rho\sqrt{1 + \rho^2}$, and
$\frac{\rho}{\sqrt{1 + \rho^2}}$ which describe Schr\"{o}dinger type
equation \cite{las}, Klein-Gordon or semirelativistic equation
\cite{frle}, iBq, and imBq (see \cite{chooz0} and references
therein).

The solution can formally be given by
$$u(t,x) = \frac1{(2\pi)^n} \int_{\mathbb{R}^n} e^{i(x\cdot \xi - t \omega(|\xi|))} \widehat{\varphi}(\xi)\,d\xi.$$
Here $\widehat{\varphi}$ is the Fourier transform of $\varphi$
defined by $\int_{\mathbb{R}^n }e^{-ix\cdot \xi} \varphi(x)\,dx$.
There has been a lot of work on the space time estimates for the
solution  $u$ which play important roles in recent studies on
nonlinear dispersive equations. (See  Cazenave \cite{caz}, Sogge
\cite{sog} and Tao \cite{tb} and references therein.) Especially,
when $\omega(\rho)=\rho^a$, $a\neq 0$, the solution satisfies
\begin{equation}\label{basic}
 \|u\|_{\mix{q}{p}}\le C\|\varphi\|_{\dot{H}^s}
\end{equation}  for some $p,$ $q$ with $s=\frac
n2-\frac{n}p-\frac aq$, which is known as Strichartz estimates.
These estimates were first established by Strichartz \cite{str} for
$q =p$ and were generalized to mixed norm $(q\neq p)$ spaces by
Ginibre and Velo \cite{gv1, gv2} except the endpoint cases, which
were later proven by Keel and Tao \cite{kt}.

It is well known that the estimate \eqref{basic} is possible only if
$n/p+2/q\le n/2$, $q\ge 2$ when $a > 0$, $a\neq 1$, and
$\frac{n-1}p+\frac 2q\le \frac{n-1}2$, $q\ge 2$ when $a=1$ as it can
be easily seen by Knapp's examples. In  applications of
\eqref{basic}, depending on various problems being considered, the
existence of proper $(p,q)$ for which \eqref{basic} holds  is
important. Hence, there have been attempts to extend the range $p,q$
via a suitable generalization \cite{tao, ster} while the natural
scaling structure remains unchanged.  As it was observed in
\cite{ster, sh}, the estimates have wider ranges of admissible $p,q$
when $\varphi$ is a radial function. This seems to make sense in
that Knapp's examples are non-radial. However, to make these
estimates on the extended range hold for general functions which are
no longer radial, such extension should be compensated with extra
regularity in angular direction.

For precise description we now define
a function spaces of Sobolev type in spherical
coordinates. Let $\De_\s=\sum_{1\le i<j\le n} \Omega_{i,j}^2$, $\Omega_{i,j}=x_i\partial_j-x_j\partial_i$,  be the Laplace-Beltrami operator defined
on the unit sphere in $\mathbb R^n$ and set  $D_\s = \sqrt{1 - \Delta_\s}$.
For $|s| < n/2,\;\; \al \in \mathbb{R},$ we denote by $\dot{H}_r^{s}H_\s^{\al}$ the space
\[ \Big\{f \in \mathcal S' :
\|f\|_{\dot{H}_r^{s}H_\s^{\al}} \equiv \|\,|\na|^sD_\s^\al
f\|_{L^2}<\infty \Big\}.
\]
(It should be noted that $C_c^\infty$ is dense in
$\dot{H}_r^{s}H_\s^{\al}$ since $|s| < n/2$.)
A natural generalization of \eqref{basic} is the estimate
\begin{equation}\label{angle}
 \|u\|_{\mix{q}{p}}\le C\|\varphi\|_{\dot{H}_r^{s}H_\s^{\al}}.
\end{equation}
In fact, for the wave equation $(\omega(\rho)=\rho)$ Strebenz
\cite{ster} obtained \eqref{angle} on almost optimal range of $q,r$
up to the sharp regularity (see also Section \ref{wavet}). In
\cite{jiwayu} \eqref{angle} was shown for $\frac1q <
(n-1)(\frac12-\frac1p), q \ge 2$, and $\al \ge \frac1q$,  when
$\omega(\rho) = \rho^a$  by utilizing Rodnianski's argument in
\cite{ster} and weighted Strichartz estimates (see \cite{chozsash,
fawa, choleeoz}). Recently, Guo and Wang \cite{guwa} considered the
estimate \eqref{angle} with $\omega(\rho)=\rho^a$ and  radially
symmetric initial data, and obtained \eqref{angle} on the  optimal
range of $p,q$ except some endpoint cases.

In a different direction one may try to extend  \eqref{basic} to include  more general $\omega$.  Let us consider
$\omega\in C^\infty(0,\infty)$ which
satisfies the following properties:
\begin{align}
\tag{$i$}\label{con1}
 &\omega'(\rho) > 0, \text{ and either } \omega''(\rho) > 0 \text{ or } \omega''(\rho) < 0 ,
 \\
\tag{$ii$}\label{con2} &|\omega^{(k)}(\rho_1) |\sim
|\omega^{(k)}(\rho_2)| \;\ \text{ for } %\;\ k = 1, 2\;\;\mbox{and}
\;\; 0 < \rho_1 < \rho_2 < 2\rho_1,
\\
\tag{$iii$}\label{con3} &\rho|\omega^{(k+1)}(\rho)| \lesssim
|\omega^{(k)}(\rho)|. %\;\ \text{ for }\;\ k \ge 1 \;\;\text{and}\;\; \rho > 0.
\end{align}

 We also define
 a pseudo-differential operator $\mathcal D_\omega^{s_1, s_2}$ by setting
  $$\mathcal F({\mathcal D_\omega^{s_1, s_2}} f)(\xi)=\left(\frac{\omega'(|\xi|)}{|\xi|}\right)^{s_1} |\omega''(|\xi|)|^{s_2} \widehat f(\xi).$$
Here $\mathcal F$ denotes the Fourier transform. In \cite{choozxia}
(also see \cite{GNT} for earlier result), the authors proved the
following: If $\omega$ satisfies the conditions \eqref{con1},
\eqref{con2} for $k=1,2,$ and \eqref{con3} for $1\le k\le [\frac
n2]+1$\footnote{In \cite{choozxia} the condition \eqref{con3} was
assumed for $k\ge 1$ to get \eqref{homoo} but \eqref{con3} for $1\le
k\le [\frac n2]+1$  is enough as it is clear from the proof in
\cite{choozxia}.}, then
\begin{align}
\label{homoo} \| u \nmix{q}{p}\lesssim \|\mathcal D_\omega^{s_1, s_2}
\varphi\|_{\dot{H}^s}
\end{align}
holds for $2 \le p,q\le\infty$, $\frac2q+\frac np \le \frac n2$ and
$(n, p, q)\neq (2, \infty, 2)$ with
\begin{equation}
\label{scon}
s_1 =  (\frac14 - \frac1{2p})-\frac1q, \ s_2
=\frac1{2p}-\frac14, \ s = n(\frac 12 - \frac 1p)- \frac2q.
\end{equation}
As it can be seen by considering homogeneity the range and the
exponents are clearly sharp (\cite{choozxia}).

In this note we  intend to study the estimates \eqref{basic} and
\eqref{angle} within a unified  framework. More precisely,  we
consider the estimate
\begin{align}
\label{homo} \| u \nmix{q}{p}\lesssim \|\mathcal D_\omega^{s_1, s_2}
\varphi\|_{\dot{H}_r^sH_\s^\al}
\end{align}
which has a wider range $p,q$ of boundedness than \eqref{homoo}. By
allowing some regularity loss in the spherical variables, we want to
find the optimal range of $p,q$ for which \eqref{homo} holds. In
fact, using a Knapp type example which is adapted to radial function
one can see that \eqref{homo} is possible  only if
\begin{equation}\label{nec}
\frac1q
\le \frac{2n-1}{2}(\frac12 - \frac1p).
\end{equation}
(See Section \ref{neccon}.) Since we already have the usual
Strchartz estimates \eqref{homoo} on the range $ \frac2q+\frac np\le
\frac n2$, we are mainly interested in the estimates for $(p,q)$
which are contained in the region $ \frac2q+\frac np>\frac n2$.

\medskip

  The following is
our first result which establishes \eqref{homo} in the best possible range of $p,q$ except an endpoint.
\begin{thm}\label{opt}  Let  $n \ge 2, 2 \le p, q \le \infty$ and $s_1, s_2, s$ given by  \eqref{scon}.   Suppose that $\omega\in C^\infty(0,\infty)$ satisfies the conditions
\eqref{con1},  \eqref{con2} for $k= 1, 2$, and \eqref{con3} for
$1\le k \le \max(4, [\frac n2]+1)$. If $\frac n2(\frac12-\frac1p)
\le \frac1q \le \frac{2n-1}{2}(\frac12- \frac1p)$,   $(n, p, q) \neq
(2, \infty, 2)$,  and $(p, q) \neq (\frac{2(2n-1)}{2n-3}, 2)$, then
 the solution $u$ to
\eqref{linear eqn} satisfies \eqref{homo}
for $\alpha > \frac{5n-1}{5n-5} (\frac np+\frac 2q-\frac n2 ) $.
\end{thm}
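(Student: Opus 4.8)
The plan is to reduce, via a Littlewood--Paley decomposition and a spherical--harmonic decomposition, to a single ``angular block'' estimate, and to obtain the latter by interpolating the known endpoint estimate \eqref{homoo} against a sharp dispersive bound for the angular--frequency--localized propagator. First I would Littlewood--Paley decompose $\varphi=\sum_\la P_\la\varphi$ in the full frequency variable ($|\xi|\sim\la$, $\la$ dyadic). Conditions \eqref{con1}--\eqref{con3} guarantee that on each dyadic annulus $\omega$ is, up to uniform constants, ``quasi--homogeneous'', so that after the natural rescaling $\xi\mapsto\la\xi$ (and a matching rescaling of $t$) the rescaled phases obey derivative bounds uniform in $\la$; moreover the weight $(\omega'(|\xi|)/|\xi|)^{s_1}|\omega''(|\xi|)|^{s_2}$ defining $\mathcal D_\omega^{s_1,s_2}$, together with the $\dot H^s$ weight, is exactly what makes \eqref{homo} invariant under this rescaling, as the choice \eqref{scon} already reflects. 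Since $p,q\ge2$, the $\ell^2$ Littlewood--Paley inequality sums the dyadic pieces, so it suffices to prove, for $\hat\varphi$ supported in $|\xi|\sim1$, that $\|e^{-it\omega(|\nabla|)}\varphi\|_{L^q_tL^p_x}\lesssim\|D_\sigma^\alpha\varphi\|_{L^2}$.

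Next I would write $\varphi=\sum_M\varphi_M$, with $\varphi_M$ the projection of $\varphi$ onto spherical harmonics of degree $\sim M$ ($M$ dyadic). These blocks are $L^2$--orthogonal and $\|\varphi_M\|_{L^2}\sim M^{-\alpha}\|D_\sigma^\alpha\varphi_M\|_{L^2}$ for $M\gtrsim1$, so the estimate will follow once one has the single--block estimate
\[
\|e^{-it\omega(|\nabla|)}\varphi_M\|_{L^q_tL^p_x}\ \lesssim\ M^{\sigma}\|\varphi_M\|_{L^2},\qquad \sigma=\tfrac{5n-1}{5n-5}\Bigl(\tfrac np+\tfrac2q-\tfrac n2\Bigr).
\]
Indeed, the triangle inequality followed by Cauchy--Schwarz gives
\[
\sum_M\|e^{-it\omega(|\nabla|)}\varphi_M\|_{L^q_tL^p_x}\ \lesssim\ \sum_M M^{\sigma-\alpha}\|D_\sigma^\alpha\varphi_M\|_{L^2}\ \le\ \Bigl(\sum_M M^{2(\sigma-\alpha)}\Bigr)^{1/2}\|D_\sigma^\alpha\varphi\|_{L^2},
\]
and the geometric sum over dyadic $M$ converges precisely when $\alpha>\sigma$, which is where the strict inequality in the hypothesis on $\alpha$ comes from. (For $M\lesssim1$ the block is essentially radial, and \eqref{homoo} together with the radial theory already handles it with no angular regularity.)

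For the single--block estimate I would interpolate. Estimate \eqref{homoo} gives it with $M^0$ on $\frac2q+\frac np\le\frac n2$, in particular on the line $\frac1q=\frac n2(\frac12-\frac1p)$ where $\sigma=0$, and Plancherel gives it at $(p,q)=(2,\infty)$; since the new part of the region in \eqref{homo} is the convex hull of that line and the Knapp line $\frac1q=\frac{2n-1}2(\frac12-\frac1p)$, and $\sigma$ is affine in $\frac np+\frac2q-\frac n2$ and vanishes on the former, complex interpolation reduces everything to the block estimate on the Knapp line for $2\le p<\frac{2(2n-1)}{2n-3}$ (near $q=2$ one interpolates a slightly subcritical version of it, which is responsible for the excluded endpoint $(\frac{2(2n-1)}{2n-3},2)$). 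On the Knapp line I would use the $TT^*$ method: the problem becomes an $L^{q'}_tL^{p'}_x\to L^q_tL^p_x$ bound for convolution with the kernel $K_M(t,x)=\int_{|\xi|\sim1}e^{i(x\cdot\xi-t\omega(|\xi|))}\psi_M(\xi)\,d\xi$ for a fixed frequency--and--angular cutoff $\psi_M$. Through the Fourier--Bessel expansion this reduces to the one--dimensional oscillatory integrals $\int_0^\infty e^{-it\omega(r)}(r|x|)^{-(n-2)/2}J_\nu(r|x|)\chi(r)r^{n-1}\,dr$ with $\nu\sim M$, to be estimated in three regimes: (a) $|x|\ll M$, where $J_\nu$ is negligibly small; (b) $|x|\gg M$, where $J_\nu$ is oscillatory and stationary phase in $r$ --- using $\omega'>0$, $\omega''\ne0$ and the van der Corput bounds from \eqref{con2}--\eqref{con3} --- yields a $|t|^{-(n-1)/2}$--type decay with an explicit power of $M$; and (c) the transition zone $|x|\sim M$, which is Airy--type. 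Feeding the resulting pointwise bounds on $K_M$ into Young's and Hardy--Littlewood--Sobolev inequalities, together with the sharp $L^p(S^{n-1})$ bounds for degree--$\sim M$ spherical harmonics, produces the block estimate, with the value $\sigma=\frac{5n-1}{5n-5}(\cdots)$ emerging from balancing the contributions of regimes (b) and (c) against the spherical Sobolev loss.

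I expect regime (c) to be the main obstacle: on $r|x|\sim\nu$ the Bessel function decays only like $\nu^{-1/3}$ and the radial stationary phase degenerates (the radial and angular oscillations nearly cancel), so a crude estimate there loses too much in $M$; one must exploit an Airy--type (third--derivative) oscillatory bound and interpolate between the $L^2$ and $L^\infty$ behaviour of this piece to recover the sharp $\sigma$. A persistent secondary difficulty is that $\omega$ is only assumed to satisfy \eqref{con1}--\eqref{con3}, not to be homogeneous, so every appeal to Bessel or stationary--phase asymptotics must be replaced by a robust van der Corput argument using the derivative bounds \eqref{con2}--\eqref{con3}, uniformly over dyadic scales; the hypothesis that \eqref{con3} holds for $1\le k\le\max(4,[n/2]+1)$ is precisely what supplies enough derivatives for the non--stationary--phase bound in dimension $n$ and for the Airy bound in regime (c).
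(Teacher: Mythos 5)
Your opening reductions (Littlewood--Paley plus rescaling, spherical harmonic blocks, summation over dyadic degrees requiring $\alpha>\sigma$, and interpolation with \eqref{homoo} so that only the neighbourhood of the sharp line matters) do match the paper's Lemma \ref{flocal} and Section \ref{thmproof}. The gap is in the part you compress into ``$TT^*$ + Young/HLS on pointwise bounds for $K_M$ produces the block estimate on the Knapp line.'' On the line $\frac1q=\frac{2n-1}2(\frac12-\frac1p)$ every exponent is critical: after the Fourier--Bessel reduction the dyadic spatial pieces $\mathcal T_R^\nu$ obey exactly $R^{\frac1q-\frac{2n-1}2(\frac12-\frac1p)}=R^0$ (Proposition \ref{linear}, and this power is sharp by the example in Section \ref{neccon}), so summing over $R$ diverges; equivalently, in a global kernel argument the radial weight $r^{-(n-1)(\frac12-\frac1p)}$ sits precisely at the endpoint where H\"older/Young fail, and a naive HLS application gives at best a logarithmic loss or restricted weak type. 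The paper's entire Sections 4.3.2--4.3.5 exist to overcome this: the temporal localization Lemma \ref{local t} (needed already to prove Proposition \ref{linear} itself, e.g.\ to pass from $q=4$ to $q=2$ and to handle $\nu\lesssim R\lesssim\nu^{8/5}$), an improved bilinear estimate with off-diagonal gain $(R_*/R^*)^{\epsilon}$ when the spatial scales separate (Lemma \ref{interaction}), Bourgain's summation lemma plus real interpolation to upgrade the resulting weak-type bounds to strong type, and a Keel--Tao style bilinear interpolation to reach $\frac{2n}{n-1}\le p<\frac{2(2n-1)}{2n-3}$. Your proposal contains no substitute for any of this, and without such a mechanism the critical-line block estimate --- the heart of the theorem --- does not follow.

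A second, quantitative gap: you assert that the value $\sigma=\frac{5n-1}{5n-5}(\frac np+\frac2q-\frac n2)$ ``emerges from balancing'' an Airy-type analysis of the transition zone $r\rho\sim\nu$ against the stationary-phase regime, but nothing is computed. In the paper the loss $\nu^{\frac45(\frac12-\frac1p)}$ does not come from Airy asymptotics at all: it comes from a deliberately crude treatment (trivial $L^\infty$ bound plus temporal localization and H\"older) of the range $\nu\lesssim r\lesssim\nu^{8/5}$, with the threshold $\nu^{8/5}$ dictated by the secondary term $\nu^2 r^{-3/2}e^{\pm ir}$ in the expansion \eqref{bessel4}; indeed the authors remark that the angular exponent is not expected to be sharp and that refined Bessel estimates (such as the $\nu^{-1/3}$ Airy behaviour you invoke) should improve it. So your scheme, even if completed, would not obviously produce the stated constant, and you have not verified that its losses are bounded by $M^{\sigma}$. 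Relatedly, for non-homogeneous $\omega$ you must track how the van der Corput constants depend on the size $\lambda_0\sim|N\omega''(N)/\omega'(N)|$ of the rescaled second derivative and check that the resulting powers $\lambda_0^{-\frac12(\frac12-\frac1p)}$ are exactly absorbed by the weight $|\omega''|^{s_2}$ with $s_2=\frac1{2p}-\frac14$; this bookkeeping, which fixes \eqref{scon}, is only gestured at in your rescaling remark.
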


The aforementioned $\rho^a\, (0 < a \neq 1)$, $\sqrt{1 + \rho^2}$,
$\rho\sqrt{1 + \rho^2}$, and $\frac{\rho}{\sqrt{1 + \rho^2}}$
satisfy the conditions \eqref{con1}, \eqref{con2}, and \eqref{con3}.
Hence Theorem \ref{opt} gives various new  estimates for
Schr\"{o}dinger type equation, Klein-Gordon or semirelativistic
equation, iBq, and imBq. Theorem \ref{opt} generalizes Shao's
results in \cite{sh} where $\omega(\rho) = \rho^2$ and radial data
were considered. In \cite{guwa}, the estimates for $(p,q)$ on the
sharp line $\frac1q = \frac{2n-1}{2}(\frac12- \frac1p)$ were
obtained when $p\le q$, $\omega(\rho)=\rho^a$ and  the initial datum
$\varphi$ is radial. Our result include all the  estimates on the
sharp line except for $(p, q) \neq (\frac{2(2n-1)}{2n-3}, 2)$, which
is left open and seems to be beyond the method of this paper.
  It should be noted that if
$\omega$ satisfies \eqref{simsim} below instead of \eqref{con3}, %for $k=1$,
then $\mathcal D_\omega^{s_1, s_2}$ simplifies so that
$\mathcal D_\omega^{s_1, s_2} \sim |\omega''(|\nabla|)|^{-\frac1q}$.
Although Theorem \ref{opt} gives a sharp estimate in $(q, p)$ pairs,
there is no reason to believe that the angular regularity is sharp.
Substantial improvement will be possible by obtaining more refined
estimates for Bessel function.

When $\omega$ satisfies
\begin{equation}\tag{$iv$}\label{simsim}\rho|\omega''(\rho)| \sim
|\omega'(\rho)|\;\ \text{for}\; \rho>0,\end{equation} \eqref{homo}
can be shown with the less angular regularity. (See Section
\ref{improve}). The following is our second result which improves
the  result in \cite{jiwayu}.

\begin{thm}\label{opt2}
Let  $n \ge 2, 2 \le p, q \le \infty$ and $s_1, s_2, s$ given by
\eqref{scon}.   Suppose that $\omega\in C^\infty(0,\infty)$
satisfies the conditions \eqref{con1}, \eqref{con2} for $k = 1, 2$, \eqref{con3} for $1\le k\le \max(4, [\frac n2]+1)$, and \eqref{simsim}.
If $\frac n2(\frac12-\frac1p) \le \frac1q \le
\frac{2n-1}{2}(\frac12- \frac1p)$,   $(n,p, q) \neq (2,\infty, 2)$, and
$(p, q) \neq (\frac{2(2n-1)}{2n-3}, 2)$,
 the solution $u$ to
\eqref{linear eqn} satisfies \eqref{homo}
for $\alpha > \frac{1}{2}\frac{2n-1}{n-1} (\frac np+\frac 2q-\frac n2 ) $.
\end{thm}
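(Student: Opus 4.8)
The plan is to follow the same strategy as in the proof of Theorem \ref{opt}, but to exploit the extra hypothesis \eqref{simsim} to reduce the angular loss. Under \eqref{simsim} we have $\rho|\omega''(\rho)|\sim|\omega'(\rho)|$, so that $\left(\frac{\omega'(\rho)}{\rho}\right)^{s_1}|\omega''(\rho)|^{s_2}\sim |\omega''(\rho)|^{s_1+s_2}=|\omega''(\rho)|^{-1/q}$, and hence $\mathcal D_\omega^{s_1,s_2}\sim|\omega''(|\nabla|)|^{-1/q}$ as noted after Theorem \ref{opt}. First I would perform a Littlewood--Paley decomposition in the radial frequency, writing $\varphi=\sum_{\mu\in 2^{\Z}}\varphi_\mu$ with $\hat\varphi_\mu$ supported in $|\xi|\sim\mu$; by the conditions \eqref{con1}--\eqref{con3} the dispersion relation $\omega$ is, on each dyadic block, comparable to the homogeneous model $|\omega''(\mu)|\rho^2$ after rescaling, so the estimate \eqref{homo} on a single block reduces (by a scaling argument in both $x$ and $t$, absorbing the Jacobian factor $|\omega''(\mu)|^{-1/q}$ which is exactly the symbol of $\mathcal D_\omega^{s_1,s_2}$) to the corresponding estimate for $e^{it\Delta}$, i.e. $\omega(\rho)=\rho^2$. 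One then sums the dyadic pieces: since the spaces $\dot H_r^s H_\sigma^\alpha$ and $L^q_tL^p_x$ are not $\ell^2$-orthogonal for $p,q\neq2$, the summation uses the fact that the relevant $(p,q)$ lie strictly off the scaling line $\frac2q+\frac np=\frac n2$ together with the gain $\alpha>\frac12\frac{2n-1}{n-1}(\frac np+\frac2q-\frac n2)$, so that after the reduction a geometric series in $\mu$ converges (alternatively one invokes the Littlewood--Paley square function estimate on the frequency-localized pieces).

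The core of the argument is therefore the frequency-localized estimate for the Schrödinger propagator with radial frequency $\sim1$, combined with a further spherical-harmonic decomposition $\varphi=\sum_{k\ge0}\varphi_k$ into eigenspaces of $-\Delta_\sigma$ with eigenvalue $k(k+n-2)$, so $D_\sigma\varphi_k\sim\langle k\rangle\varphi_k$. On each such piece the kernel is, via the Funk--Hecke / Bessel expansion, governed by $J_{k+\frac n2-1}(r\rho)$, and one needs the $L^q_tL^p_x$ bound for the operator with this kernel with a constant that grows at most like $\langle k\rangle^{\alpha}$ for $\alpha$ as in the statement, so that the sum over $k$ converges against $\|D_\sigma^\alpha\varphi\|$. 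Under \eqref{simsim} the relevant weighted-$L^2$ / Bessel estimates (of the type in \cite{chozsash, fawa, choleeoz, guwa}) are available with a better power of $k$ than in the general case treated in Theorem \ref{opt}; quantitatively, the improvement from the exponent $\frac{5n-1}{5n-5}$ to $\frac12\frac{2n-1}{n-1}$ reflects that one may use the sharp decay of the Bessel function together with the stationary-phase analysis of the radial propagator without the lossy interpolation step forced by the general $\omega$. I would isolate the needed Bessel/Strichartz input as a lemma and then interpolate between the endpoint $(p,q)$ on the sharp line $\frac1q=\frac{2n-1}{2}(\frac12-\frac1p)$ (excluding $(p,q)=(\frac{2(2n-1)}{2n-3},2)$) and the classical Strichartz range $\frac2q+\frac np=\frac n2$, on which $\alpha=0$ suffices; the claimed range for $\alpha$ is the linear interpolant of these two, which is why the condition reads $\frac n2(\frac12-\frac1p)\le\frac1q\le\frac{2n-1}{2}(\frac12-\frac1p)$.

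The main obstacle I expect is the endpoint behaviour near $q=2$ and near $p=\infty$: the $L^2_t$ endpoint requires a Keel--Tao type $TT^*$ argument adapted to the spherical decomposition, and the dispersive decay of each $k$-th piece degrades as $k\to\infty$, so one must track the $k$-dependence carefully to ensure the angular sum still converges with the stated $\alpha$; this is precisely the point where the excluded case $(p,q)=(\frac{2(2n-1)}{2n-3},2)$ and the excluded $(n,p,q)=(2,\infty,2)$ enter. A secondary technical point is verifying that the dyadic rescaling is uniform in $\mu$, which is where \eqref{con2} for $k=1,2$ and \eqref{con3} for $1\le k\le\max(4,[\frac n2]+1)$ are used to control the phase and its derivatives; these are the same conditions as in Theorem \ref{opt}, so that part is essentially inherited. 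Once the frequency-localized estimate with the sharp power of $\langle k\rangle$ is in hand, assembling the full estimate is routine.
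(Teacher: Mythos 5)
Your overall architecture (frequency localization, spherical harmonic expansion reducing to operators with kernel $J_{\nu}$, $\nu=\frac{n-2+2k}2$, Keel--Tao type bilinear handling of the sharp line, then interpolation with the classical range where $\alpha=0$) is the same skeleton the paper uses, and indeed the paper proves Theorem \ref{opt2} by re-running the proof of Theorem \ref{opt}. But the decisive new ingredient is missing from your proposal, and its role is mischaracterized. The whole content of the improvement from $\frac{5n-1}{5n-5}$ to $\frac12\frac{2n-1}{n-1}$ is Proposition \ref{linear-2}: for the localized operator $\mathcal T_R^\nu$ one has \eqref{single-2}, i.e.\ the angular factor improves from $(1+\nu)^{\frac45(\frac12-\frac1p)}$ to $(1+\nu)^{\frac12(\frac12-\frac1p)}$, \emph{at the cost} of a worse power $\lambda_0^{-(\frac12-\frac1p)}$. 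This estimate has nothing to do with \eqref{simsim}: it holds for any phase satisfying \eqref{cond ome 1st}, and is proved in the intermediate regime $\nu\ll R\lesssim\nu^2$ by keeping the exact Bessel phase $\theta(r\rho)$ from \eqref{bessel2nu1} and applying van der Corput, accepting a loss on the time range $|t|\lesssim\lambda_0^{-1}\nu$. The hypothesis \eqref{simsim} enters only afterwards, through the rescaling \eqref{www}: it forces $\lambda_0=|N\omega''(N)/\omega'(N)|\sim1$ uniformly in the dyadic frequency $N$, so the $\lambda_0$-loss in Proposition \ref{linear-2} is harmless when fed into Lemma \ref{flocal}. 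Your statement that ``under \eqref{simsim} the relevant weighted-$L^2$/Bessel estimates \dots are available with a better power of $k$'' inverts this mechanism and, more seriously, defers the central estimate to the cited references, where it cannot be found; without proving the analogue of Proposition \ref{linear-2} (or at least identifying the $\nu\ll R\lesssim\nu^2$ regime and the trade-off between the $\nu$-power and the $\lambda_0$-power) the proof of Theorem \ref{opt2} has not been given.

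Two secondary points. First, the dyadic rescaling does not reduce matters to $e^{it\Delta}$: comparability of $\omega$ with $|\omega''(\mu)|\rho^2$ on a block does not transfer oscillatory-integral bounds, and the correct reduction (as in \eqref{fflocal}) is to a \emph{class} of phases $\varpi$ with $|\varpi'|\sim1$ and $\lambda_0\le|\varpi''|\lesssim1$, for which all the stationary-phase/van der Corput arguments must be carried out uniformly. Second, the summation over dyadic frequencies cannot converge as a geometric series: the exponents $s,s_1,s_2$ in \eqref{scon} are chosen exactly so that each block obeys a scale-invariant estimate, so there is no gain in $\mu$ regardless of whether $(p,q)$ lies off the line $\frac2q+\frac np=\frac n2$; the summation must go through Littlewood--Paley orthogonality and the square function (the alternative you mention only parenthetically), which is how the paper does it in Lemma \ref{flocal}.
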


 Compared to the previous works,  our approach here  is simpler and  more systematic.
Especially, it enables us to provide a short  proof of the result in
\cite{ster} (see Section \ref{wavet}). By spherical harmonic
expansion the matters basically reduce to one dimensional situation
but it involves with a family of operators which are given by Bessel
functions of different orders. To get the desired estimate, the
growth of bounds depending on the orders needs to be effectively
controlled in a uniform way. It will be done by comparing spatial
scale and the orders of Bessel functions. Our novelty here is the
use of a temporal localization (see Lemma \ref{local t}) which is
available only after frequency and spatial localizations. This kind
of localization was first observed  by the second author \cite{lee}
and similar idea was used to study the space time estimates for the
Schr\"odinger equation \cite{lcv, lr}. This make it possible to
reduce the time local estimate to that of the same scale in spatial
space so that it suffices to work on the estimate which is local in
both time and spatial spaces, and this localization also plays a
role in obtaining precise estimates for general $\omega$. (See
Section \ref{localsf}.)

This paper is organized as follows: In Section 2 we consider the
asymptotic behavior of Bessel function. In Section 3 we obtain
various preliminary estimates via space-frequency-time localization
which are expected to be useful for related problems and in Section
4 the proofs of Theorem \ref{opt}, \ref{opt2} are given.

Throughout the paper, if not specified, $A \lesssim B$, $A \sim B$
mean $A \le CB$, $C^{-1}A \le B \le CB$, respectively,  for some
generic constant $C$.

\section{Estimates for Bessel functions}

For the proofs of theorems we need estimates for Bessel functions
$J_\nu$, which depend on $\nu$. When $\nu$ is bounded,
estimates are  easy to obtain.  We start by recalling
some basic properties of Bessel functions.

     Let  $\nu_0 > 1$ be a fixed number.
 If $0 \le \nu \le \nu_0$, then
\begin{align}
&\qquad\qquad\quad |J_\nu(r)| \lesssim 1, \;\;\mbox{if}\;\;r \lesssim 1,\label{lower}\\
&J_\nu(r) = r^{-\frac12}(b_+e^{ir} + b_-e^{-ir}) +
\Psi(r),\;\;\mbox{if}\;\;r \gg 1, \label{upper}
\end{align}
where $|\Psi(r)| \lesssim r^{-\frac32}$. For instance see \cite{st,
Wat}. If $\nu > \nu_0$, then we have
\begin{align}
|J_\nu(r)| \lesssim \exp(-C\nu),\;\;\mbox{if}\;\;r \ll
\nu.\label{bessel1}
\end{align}
This is easy to show by making use of the Poisson representation
(\cite{st, Wat})
\begin{align*}
J_\nu(r) = \frac{ (\frac{r}{2})^\nu }{ \Gamma(\nu + \frac{1}{2} )
\Gamma( \frac{1}{2} ) } \int_{-1}^{1} e^{irs}(1 - s^2)^{\nu -
\frac{1}{2} } ds
\end{align*}
and Stirling's formula (\cite{leb}) $\Gamma(t) \sim
\sqrt{2\pi}t^{t-\frac12}e^{-t}$ for large $t$.

For simplicity we  denote $z+\bar z$ by
\[z+\mathcal C.\mathcal C\]
so that $\mathcal C.\mathcal C$ stands for the complex conjugate of terms
appearing before $+ \mathcal C.\mathcal C$. We now make use of the following
representation of Bessel function (see \cite{barcor} or Lemma 3 of
\cite{baruve});
\begin{align}\label{bessel2nu1}
J_\nu(r) = 2(r^2-\nu^2)^{-1/4}\left(c_\nu e^{i\theta(r)}+\mathcal
C.\mathcal C\right) + h_\nu(r),
\end{align}
where $$\theta(r) = r\left[\left(1 -
\frac{\nu^2}{r^2}\right)^{\frac12} - \frac\nu{r}\left(\frac{\pi}2 -
\cos^{-1}\frac \nu{r}\right)\right],\quad |h_\nu(r)| \lesssim
r^{-1}.$$ It can be obtained  by the stationary phase method and
Schl\"{a}fli's integral representation (see p.176, \cite{Wat})
which is given by
\begin{align}\label{schlafli}
J_\nu(r) = \frac1{\pi} \int_0^{\pi} e^{i(r\sin \theta - \nu \theta)}
\,d\theta - \frac{\sin (\nu \pi)}{\pi} \int_0^\infty e^{-\nu \tau -
r \sinh \tau}\,d\tau.
\end{align}
The following lemma gives  asymptotic bounds for  $J_\nu$ when
$\nu$ is large.

\begin{lem}\label{asymp bessel}
Let $\nu \ge \frac12$. Then the following holds:
\begin{align}
&\qquad\quad \qquad\qquad\quad |J_\nu(r)| \lesssim r^{-\frac12}, \;\;\mbox{if}\;\;r \ge 2\nu\label{bessel3},\\
& J_\nu(r) = (c_\nu r^{-1/2} + \widetilde{c_\nu}\, \nu^2\,r^{-\frac32}) e^{ir} +
\mathcal C.\mathcal C+ \Psi_\nu(r),\;\;\mbox{if}\;\;r \ge 4\nu^\frac{8}{5},\label{bessel4}
\end{align}
where $
c_\nu = \frac{e^{-i(\frac\pi4 + \frac{\nu\pi}2)}}{2\sqrt{2\pi}},$ $  \widetilde{c_\nu} = -2ic_\nu,$ and $|\Psi_\nu(r)| \lesssim r^{-1}.$
\end{lem}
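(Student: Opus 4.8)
The plan is to deduce both parts directly from the representation \eqref{bessel2nu1} (which is itself the output of a stationary‑phase analysis of \eqref{schlafli}), so that almost no new oscillatory‑integral work is needed. \emph{For \eqref{bessel3}}, I would simply feed $r\ge 2\nu$ into \eqref{bessel2nu1}: then $r^2-\nu^2\ge\tfrac34 r^2$, hence $(r^2-\nu^2)^{-1/4}\le(4/3)^{1/4}r^{-1/2}\lesssim r^{-1/2}$; the phase $\theta(r)$ is real, so $|e^{i\theta(r)}|=1$; $|c_\nu|=\tfrac1{2\sqrt{2\pi}}$ is an absolute constant; and $r\ge 2\nu\ge 1$ forces $|h_\nu(r)|\lesssim r^{-1}\le r^{-1/2}$. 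Adding the three contributions gives $|J_\nu(r)|\lesssim r^{-1/2}$. (This uses only that the implied constant in $|h_\nu(r)|\lesssim r^{-1}$ is uniform once $r$ is bounded away from the turning point $r=\nu$, say $r\ge 2\nu$, which is the content of \eqref{bessel2nu1}.)

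\emph{For \eqref{bessel4}}, I would first record that $4\nu^{8/5}\ge 2\nu$ for every $\nu\ge\tfrac12$, so \eqref{bessel2nu1} is available on the whole range $r\ge 4\nu^{8/5}$, and there $\nu/r\le\tfrac14\nu^{-3/5}<1$, so $\nu/r$ is a genuinely small parameter. The idea is to Taylor‑expand the amplitude and the phase of the principal term of \eqref{bessel2nu1} in powers of $\nu/r$: the amplitude satisfies $2(r^2-\nu^2)^{-1/4}=2r^{-1/2}(1-\nu^2/r^2)^{-1/4}=2r^{-1/2}+O(\nu^2 r^{-5/2})$, while expanding $(1-\nu^2/r^2)^{1/2}$ and $\cos^{-1}(\nu/r)$ in the explicit formula for $\theta$ gives $\theta(r)=r+c\,\nu^2 r^{-1}+O(\nu^4 r^{-3})$ for an explicit constant $c$. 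Then $e^{i\theta(r)}=e^{i(r+c\nu^2/r)}\big(1+O(\nu^4 r^{-3})\big)$, with $e^{ic\nu^2/r}$ an explicit bounded phase; multiplying the amplitude and phase factors, the product of the two leading terms reproduces the main term $c_\nu r^{-1/2}e^{ir}$, the first cross term gives $\widetilde{c_\nu}\,\nu^2 r^{-3/2}e^{ir}$ with $\widetilde{c_\nu}=-2ic_\nu$, and every remaining piece — together with $h_\nu(r)$ — is $O(\nu^2 r^{-5/2})$, $O(\nu^4 r^{-7/2})$, or $O(r^{-1})$. Collecting these and their complex conjugates into $\Psi_\nu(r)$ completes the identity. (One could instead bypass \eqref{bessel2nu1} and re‑run stationary phase on $\tfrac1\pi\int_0^\pi e^{i(r\sin\theta-\nu\theta)}\,d\theta$ about $\theta_0=\cos^{-1}(\nu/r)$, keeping the leading amplitude together with its first correction; the other integral in \eqref{schlafli} is harmless because $\int_0^\infty e^{-\nu\tau-r\sinh\tau}\,d\tau\le\int_0^\infty e^{-r\tau}\,d\tau=r^{-1}$.)

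The Taylor expansions and the coefficient matching are routine; the step I expect to be the real obstacle is checking that the discarded remainder is genuinely $O(r^{-1})$ under the single hypothesis $r\ge 4\nu^{8/5}$. The delicate point is that on this range $\nu/r$ is small but $\nu^2/r$ need not be, so one may not simply drop the phase correction $\theta(r)-r$; one must keep the $O(\nu^2/r)$ phase term inside the exponential, so that the leftover phase error is $O(\nu^4 r^{-3})$ rather than $O(\nu^4 r^{-2})$, hence the worst amplitude$\times$phase remainder is $O(\nu^4 r^{-7/2})$, and the inequality $r^{-1/2}\cdot\nu^4 r^{-3}\lesssim r^{-1}$ is exactly equivalent to $r\gtrsim\nu^{8/5}$ — which is the stated threshold. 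Getting this bookkeeping right, and tracking the normalisation down to $\widetilde{c_\nu}=-2ic_\nu$, is the only substantive estimate in the lemma.
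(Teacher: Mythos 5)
Your argument for \eqref{bessel3} is fine and is exactly the one-line deduction from \eqref{bessel2nu1} that the paper intends ($r\ge 2\nu$ gives $(r^2-\nu^2)^{-1/4}\lesssim r^{-1/2}$ and $|h_\nu(r)|\lesssim r^{-1}\le r^{-1/2}$ since $r\ge 1$). For \eqref{bessel4} your outline coincides with the paper's (Taylor-expand the amplitude $(r^2-\nu^2)^{-1/4}$ and the phase $\theta$ of \eqref{bessel2nu1} about $r$), but the step you rely on to reach the threshold $\nu^{8/5}$ does not close. The device "keep the $O(\nu^2/r)$ phase term inside the exponential, so the leftover phase error is $O(\nu^4r^{-3})$" is incompatible with the formula being proved: \eqref{bessel4} asserts an expansion whose oscillation is exactly $e^{\pm ir}$ with coefficients $c_\nu r^{-1/2}+\widetilde{c_\nu}\nu^2 r^{-3/2}$, so at some point you must replace $e^{ic\nu^2/r}$ by $1+ic\nu^2/r$ (your "cross term" step does precisely this), and the discarded piece contributes $r^{-1/2}\,|e^{ic\nu^2/r}-1-ic\nu^2/r|\sim r^{-1/2}\min\big(\tfrac{\nu^2}{r},\tfrac{\nu^4}{r^2}\big)$. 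This is $\lesssim r^{-1}$ only when $r\gtrsim \nu^{8/3}$; on $4\nu^{8/5}\le r\ll\nu^2$ the parameter $\nu^2/r$ is not even small (it can be as large as $\nu^{2/5}$), so no expansion of the phase about $e^{ir}$ is available there, and your accounting of the "worst remainder" as $\nu^4r^{-7/2}$ undercounts exactly this term. In short, the point you yourself flagged as the real obstacle is where the proposal fails.

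For comparison, the paper writes $e^{i\theta(r)}=e^{ir}(1+i(\theta(r)-r))+e^{ir}\big(e^{i(\theta(r)-r)}-1-i(\theta(r)-r)\big)$, absorbs the second piece into $\widetilde{h_\nu}$, and asserts $|\widetilde{h_\nu}(r)|\lesssim r^{-1}$ "by Taylor's theorem"; that is the same quadratic remainder $r^{-1/2}|\theta(r)-r|^2\sim\nu^4r^{-5/2}$, which again needs $r\gtrsim\nu^{8/3}$ rather than $r\ge 4\nu^{8/5}$, so the paper's own write-up glosses the identical point and you should not expect the $8/5$ exponent to come out of this bookkeeping. Indeed it cannot: for $\nu^{8/5}\lesssim r\ll\nu^2$ the "correction" $\nu^2r^{-3/2}$ exceeds the main term $r^{-1/2}$ while $|J_\nu(r)|\lesssim r^{-1/2}$ by \eqref{bessel3}, so a decomposition of the form \eqref{bessel4} with $|\Psi_\nu(r)|\lesssim r^{-1}$ cannot hold on that part of the range. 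A correct argument must either restrict to $r\gtrsim\nu^{8/3}$ (equivalently accept a weaker bound on $\Psi_\nu$ on the intermediate range), or keep the genuine phase $\theta(r)$ from \eqref{bessel2nu1} on $\nu\ll r\lesssim\nu^2$ and estimate the corresponding operators by stationary phase/van der Corput there, as is done in the proof of Proposition \ref{linear-2}. A minor further point: with the paper's $\theta$ one has $\theta(r)-r=-\tfrac{3\nu^2}{2r}+O(\nu^4r^{-3})$, so the cross term's constant is $-\tfrac{3i}{2}c_\nu$ rather than $-2ic_\nu$; only $|\widetilde{c_\nu}|\sim1$ matters downstream, but your "coefficient matching" should be redone rather than asserted.
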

\begin{proof}[Proof of Lemma \ref{asymp bessel}]
We rewrite $e^{i\theta(r)}$ as
$$
e^{i\theta(r)} = e^{ir}\Big(1 + i(\theta(r) - r)\Big) +
e^{ir}\Big(e^{i(\theta(r) - r)} - 1 - i(\theta(r) - r)\Big).
$$
Substituting this into \eqref{bessel2nu1}, we obtain
\begin{align*}
J_\nu(r) =
r^{-\frac12}\left(1-\frac{\nu^2}{r^2}\right)^{-1/4}\Big(c_\nu
e^{ir}(1 + i(\theta(r) - r)) + \mathcal C.\mathcal C\Big) +
\widetilde{h_\nu}(r),
\end{align*}
where $$\widetilde{h_\nu}(r) = h_\nu(r) +
\left(c_\nu\frac{e^{ir}(e^{i(\theta(r)-r)}) - 1 -
i(\theta(r)-r)}{(r^2-\nu^2)^\frac14} + \mathcal C.\mathcal
C\right).$$ Let $\theta_1(r) = \big(1-\frac{\nu^2}{r^2}\big)^{-1/4}
- 1$ and $\theta_2(r) = i(\frac{3\nu^2}{2r} + \theta(r) - r)$. Then
\begin{align*}
J_\nu(r) &= r^{-\frac12}(1 + \theta_1(r))\left(c_\nu e^{ir}\big(1 - i\frac{2\nu^2}{r} + \theta_2(r)\big) + \mathcal C.\mathcal C\right) + \widetilde{h_\nu}(r)\\
&= r^{-\frac12}\left(c_\nu e^{ir}\Big(1 - i\frac{2\nu^2}{r}\Big)
+ \mathcal C.\mathcal C\right) + \Psi_\nu(r),
\end{align*}
where
\begin{align*}
\Psi_\nu(r) = \widetilde{h_\nu}(r) &+ r^{-1/2}\theta_1(r)
\left(c_\nu e^{ir}\Big(1 - i\frac{3\nu^2}{2r} + \theta_2(r)\Big) + \mathcal C.\mathcal C\right)\\
&+ r^{-\frac12}(1 + \theta_1(r))\left(c_\nu e^{ir}\theta_2(r) +
\mathcal C.\mathcal C\right).
\end{align*}
Taylor's theorem gives that $|\widetilde{h_\nu}(r)| \lesssim
r^{-1}$, $|\theta_1(r)| \lesssim \frac{\nu^2}{r^2}$ and
$|\theta_2(r)| \lesssim \frac{\nu^4}{r^3}$. Hence
$|\Psi_\nu(r)| \lesssim r^{-1}$ for $r \ge 4\nu^\frac{8}{5}$.
This
completes the proof of Lemma \ref{asymp bessel}.
\end{proof}

\section{Estimates via space-frequency localization}\label{localsf}

In this section we obtain estimates via localization on both space
and frequency sides. Let  $0 < \lambda_0 \lesssim 1$ and
 let $\varpi \in C^4(1/2,2)$ satisfy that
\begin{align}\begin{aligned}\label{cond ome 1st}
|\varpi'(\r)| \sim 1,\;\; \lambda_0 \le |\varpi''(\rho)| \lesssim 1,
\;\;|\varpi^{(3)}(\rho)| + |\varpi^{(4)}(\rho)| \lesssim 1
\end{aligned}\end{align}
if $1/2 < \rho < 2$. For $R > 0$, let us set
$\chi_{R}=\chi_{\{x:R\le |x|< 2R\}}$ and  define
\begin{align}\label{th-dfn}
\mathcal T_R^\nu h(t, r) = \chi_R(r)r^{-\frac{n-2}2}\int
e^{-it\varpi(\rho)} J_\nu(r\rho)\beta(\rho)h(\rho) d\rho,
\end{align}
where $\beta\in C_c^\infty(1/2, 2)$. In what follows $\beta$ may be
different  at each occurrence  but we keep the same notation as long
as it is contained  uniformly in  $ C_c^\infty(1/2, 2)$.

We denote  by $\mathfrak L_{r}^p$  the space $L^p(r^{n-1}dr)$.

\begin{prop} \label{linear} Let $R \gtrsim 1$ and $\mathcal T_R^\nu$ be defined by \eqref{th-dfn}.  If
$2\le p,q\le \infty$, $\nu \ge 0$, and $2/q\ge 1/2-1/p$, then there
is a constant $C = C(n, p, q) > 0$, independent of $\lambda_0, \nu,$
$R$, such that
\begin{equation}\label{single}\|\mathcal T_R^\nu h\rn\le
C
\lambda_0^{-\frac12(\frac{1}{2}-\frac1p)}(1+\nu)^{\frac{4}{5}(\frac12-\frac1p)}
R^{\frac1q-\frac{2n-1}{2}(\frac12-\frac1p)}\|h\|_2.
\end{equation}
\end{prop}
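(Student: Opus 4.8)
\medskip
\noindent\emph{Proof strategy.} The plan is to prove \eqref{single} at the four extreme points $(p,q)\in\{(2,\infty),(2,2),(\infty,2),(\infty,4)\}$ of the region $\{2\le p,q\le\infty,\ 2/q\ge1/2-1/p\}$ and then fill the quadrilateral by repeated Riesz--Thorin interpolation for mixed-norm spaces: the exponents of $\lambda_0$, $1+\nu$ and $R$ in \eqref{single} are affine in $(1/p,1/q)$, so interpolating the four corner bounds reproduces the stated constants everywhere. One may assume $R\ge c(1+\nu)$ for a small absolute constant $c$; for smaller $R$ the argument $r\rho$ stays well below $\nu$, so by \eqref{bessel1} $\mathcal T_R^\nu h$ is exponentially small in $\nu$ and \eqref{single} is trivial, the transitional window $R\sim\nu$ being absorbed by the turning-point bounds coming from \eqref{bessel2nu1}.

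At the two corners on $p=2$, \eqref{single} reads $\|\mathcal T_R^\nu h\nmix{q}{2}\lesssim R^{1/q}\|h\|_2$, with no $\nu$ or $\lambda_0$. The inner $\rho$-integral is, after writing $d\rho=\rho^{-1}\,\rho\,d\rho$, a Hankel transform of order $\nu$ of $\rho^{-1}e^{-it\varpi(\rho)}\beta(\rho)h(\rho)$; since the Hankel transform is an isometry on $L^2(\rho\,d\rho)$ and the weight $r^{-(n-2)/2}$ turns $r^{n-1}\,dr$ into $r\,dr$, one gets $\|\mathcal T_R^\nu h(t,\cdot)\|_{\mathfrak L^2_r}^2\le\int_0^\infty r\,|\mathcal H_\nu[\,\cdot\,](r)|^2\,dr\lesssim\|h\|_2^2$ uniformly in $t$, which is the $(2,\infty)$ bound. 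For $(2,2)$ one integrates in $t$ first: changing variables $u=\varpi(\rho)$ (using $|\varpi'|\sim1$) and invoking Plancherel gives $\|\mathcal T_R^\nu h\nmix{2}{2}^2\lesssim\int|\beta(\rho)h(\rho)|^2\big(\int_R^{2R}r\,|J_\nu(r\rho)|^2\,dr\big)\,d\rho$, and $\int_R^{2R}r\,|J_\nu(r\rho)|^2\,dr\lesssim R$ uniformly in $\nu$: split $[R,2R]$ according to the position of $r\rho$ relative to $\nu$ and use \eqref{bessel3}, \eqref{bessel1}, the consequences $|J_\nu(y)|\lesssim|y^2-\nu^2|^{-1/4}$ (for $|y-\nu|\gtrsim\nu^{1/3}$) and $|J_\nu(y)|\lesssim\nu^{-1/3}$ of \eqref{bessel2nu1}, and $|J_\nu|\le1$.

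The heart of the matter is the two $p=\infty$ corners, which I would obtain by the $TT^\ast$/Keel--Tao scheme applied to $\mathcal U(t):h\mapsto\mathcal T_R^\nu h(t,\cdot)$. The Hankel isometry gives the energy bound $\|\mathcal U(t)h\|_{\mathfrak L^2_r}\lesssim\|h\|_2$. For the dispersive bound one writes out the kernel of $\mathcal U(t)\mathcal U(s)^\ast$ in the $r$-variables, $\chi_R(r)\chi_R(r')(rr')^{-(n-2)/2}\int e^{-i(t-s)\varpi(\rho)}J_\nu(r\rho)J_\nu(r'\rho)\beta(\rho)^2\,d\rho$, substitutes \eqref{bessel4} for each Bessel factor, and analyses the oscillatory integrals, whose phases are $-(t-s)\varpi(\rho)+(\pm r\pm r')\rho$. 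Van der Corput together with $|\varpi''|\ge\lambda_0$ gives decay $(\lambda_0|t-s|)^{-1/2}$, and — the crucial point, available only because of the spatial cutoff $\chi_R$ — these phases are non-stationary once $|t-s|\gg R$, since $|\varpi'|\sim1$ and $r,r'\sim R$, so a few integrations by parts (legitimate as $\varpi\in C^4$) confine the kernel to $|t-s|\lesssim R$ up to rapidly decaying error; this temporal localization is the new ingredient that makes the scheme succeed. The non-oscillatory tail $\Psi_\nu=O(y^{-1})$ in \eqref{bessel4} is a power of $R^{1/2}$ smaller than the leading term and, lacking oscillation, is discarded using the $\mathfrak L^2_r$ estimates and the slack in \eqref{single}, while the subleading term $\widetilde{c_\nu}\,\nu^2y^{-3/2}e^{iy}$ is kept and produces the factor $1+\nu$, since $(1+\nu^2/R)^2\lesssim(1+\nu)^{4/5}$ when $R\gtrsim(1+\nu)^{8/5}$ (and the intermediate range $1+\nu\lesssim R\lesssim(1+\nu)^{8/5}$, where \eqref{bessel4} is unavailable, is covered by the cruder bounds of \eqref{bessel3} and \eqref{bessel2nu1}, which still supply the requisite oscillation and the correct $R$-power with room to spare). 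The outcome is $\|\mathcal U(t)\mathcal U(s)^\ast F\|_{\mathfrak L^\infty_r}\lesssim(1+\nu)^{4/5}R^{-(n-1)}(1+\lambda_0|t-s|)^{-1/2}\|F\|_{\mathfrak L^1_r}$ with the kernel supported in $|t-s|\lesssim R$. For $(\infty,4)$, Hardy--Littlewood--Sobolev ($\|\,|\sigma|^{-1/2}\ast g\|_{L^4}\lesssim\|g\|_{L^{4/3}}$) gives $\mathfrak L^1_rL^{4/3}_t\to\mathfrak L^\infty_rL^4_t$ operator norm $\lesssim(1+\nu)^{4/5}\lambda_0^{-1/2}R^{-(n-1)}$; for $(\infty,2)$ the compact support of the temporal kernel makes its $L^1_t$ norm $\lesssim(1+\nu)^{4/5}\lambda_0^{-1/2}R^{1/2-(n-1)}$, so plain Young's inequality applies. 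Taking square roots, $\|\mathcal T_R^\nu h\nmix{4}{\infty}\lesssim(1+\nu)^{2/5}\lambda_0^{-1/4}R^{\frac14-\frac{2n-1}{4}}\|h\|_2$ and $\|\mathcal T_R^\nu h\nmix{2}{\infty}\lesssim(1+\nu)^{2/5}\lambda_0^{-1/4}R^{\frac12-\frac{2n-1}{4}}\|h\|_2$, which are exactly \eqref{single} at these corners.

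The step I expect to be the main obstacle is precisely this dispersive/temporal-localization estimate: proving, uniformly in $\nu$ and $\lambda_0$, that the $TT^\ast$ kernel is genuinely confined to $|t-s|\lesssim R$ — which is what allows the borderline $q=2$ endpoint to survive Young's inequality — and that neither the non-oscillatory Bessel tail $\Psi_\nu$ nor the behaviour of $J_\nu$ near the turning point (where $|y^2-\nu^2|^{-1/4}$ is singular and \eqref{bessel4} fails) costs more than the permitted $(1+\nu)^{\frac45(\frac12-\frac1p)}$, i.e. does not degrade the sharp exponent $R^{\frac1q-\frac{2n-1}{2}(\frac12-\frac1p)}$.
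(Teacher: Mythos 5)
Your overall architecture — the four corner estimates $(2,\infty),(2,2),(\infty,2),(\infty,4)$ plus interpolation, Hankel--Plancherel at $p=2$, the square--function bound \eqref{L2-bessel} for $(2,2)$, and at $p=\infty$ a $TT^\ast$ argument based on \eqref{bessel4}, van der Corput with $|\varpi''|\ge\lambda_0$, Hardy--Littlewood--Sobolev for $q=4$ and temporal confinement of the kernel to $|t-s|\lesssim R$ for $q=2$ — coincides with the paper's, and in the regime $R\gtrsim(1+\nu)^{8/5}$ your exponent bookkeeping (including $\nu^2/R\lesssim\nu^{2/5}$ for the subleading Bessel term) is correct. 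The genuine gap is the range $c(1+\nu)\le R\lesssim(1+\nu)^{8/5}$, which you dismiss in one clause by asserting that \eqref{bessel3} and \eqref{bessel2nu1} ``still supply the requisite oscillation'' for the dispersive bound $(1+\nu)^{4/5}R^{-(n-1)}(1+\lambda_0|t-s|)^{-1/2}$. They do not, at least not with your constants: near the turning point $r\rho\sim\nu$ the amplitude $((r\rho)^2-\nu^2)^{-1/4}$ in \eqref{bessel2nu1} is singular and the only uniform bound is $|J_\nu|\lesssim\nu^{-1/3}$, which already loses against the needed $R^{-1/2}\sim\nu^{-1/2}$; and for $\nu\ll r\lesssim\nu^2$ one has $\bigl|\tfrac{d^2}{d\rho^2}\theta(r\rho)\bigr|\sim\nu^2/r$, which for $R\lesssim\nu^{8/5}$ is $\gtrsim\nu^{2/5}\gg\lambda_0$, so the second derivative of the full phase $-\tau\varpi(\rho)\pm\theta(r\rho)\pm\theta(r'\rho)$ is not bounded below by $c\lambda_0|\tau|$ unless $|\tau|\gtrsim\lambda_0^{-1}\nu^2/R$, and van der Corput does not yield $(1+\lambda_0|t-s|)^{-1/2}$ uniformly in $\lambda_0$. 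This is exactly the phenomenon behind the paper's Proposition \ref{linear-2}: running the oscillatory argument in $\nu\ll R\lesssim\nu^2$ produces the Schur bound $\lambda_0^{-1}\nu+\lambda_0^{-1/2}R^{1/2}$, i.e.\ a worse power of $\lambda_0$, which is acceptable only under the extra hypothesis \eqref{simsim} of Theorem \ref{opt2}. So the uniform-in-$\lambda_0$ dispersive estimate you rely on in the intermediate window is unsubstantiated and very likely false there.

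What the paper does instead — and what your outline is missing — is a temporal localization lemma valid for the full Bessel kernel rather than for its asymptotic main terms: Lemma \ref{local t}, proved from Schl\"afli's representation \eqref{schlafli} together with the estimate \eqref{errr} for the non-oscillatory piece and integration by parts, reduces the estimate for $\mathcal T_R^\nu$ to a time interval of length $\sim R$ with no loss. With that lemma, the window $\nu\lesssim R\lesssim\nu^{8/5}$ requires no oscillation at all: Cauchy--Schwarz and \eqref{L2-bessel} give $\|\mathcal T_R^\nu h\|_{L^\infty_t\mathfrak L^\infty_r}\lesssim R^{-\frac{n-1}2}\|h\|_2$, and H\"older in time over an interval of length $R$ gives $R^{\frac14}R^{-\frac{n-1}2}\lesssim\nu^{\frac25}R^{-\frac{n-1}2}$ at $(p,q)=(\infty,4)$ — this is precisely where the exponents $8/5$ and $\tfrac45(\tfrac12-\tfrac1p)$ come from. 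The same lemma is also what makes your small-$R$ case rigorous: for $R\ll\nu$, pointwise exponential smallness from \eqref{bessel1} does not by itself control $L^q_t$ over all of $\mathbb R$ when $q<\infty$. As written, the step you yourself flag as the main obstacle is the one that fails; to repair the proposal you should either prove a Lemma \ref{local t}-type localization for the full $J_\nu$ and use the trivial bound in the intermediate range, or carry out a genuine turning-point analysis, which the paper deliberately avoids.
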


The exponent on $R$ is sharp as it can be shown by using the example
for the necessary condition \eqref{nec} (see Section \ref{neccon}).

For the proof, we need to show the cases $(p,q)=(2,\infty)$,
$(2,2)$, $(\infty,2)$, and $(\infty, 4)$ since the other estimates
follow from interpolation. The case $(p,q)=(2,\infty)$ is
straightforward form Parseval's theorem for Hankel transforms (for
example, see Theorem 3 in \cite{Of}). Then  the case $(p,q)=(2,2)$
follows from the square function estimates for Bessel functions such
that
\begin{align}\label{L2-bessel}
\int_{r \sim R} |J_\nu(r)|^2\,dr \lesssim 1
\end{align}
for all $\nu \ge 0$ and $R > 0$.  This can be shown by using
\eqref{lower}, \eqref{upper} when $0 \le \nu \le \nu_0$,
$\int_0^\infty |J_\nu(r)|^2dr/r = 1/(2\nu)$ when $\nu \ge \nu_0$ and
$R \lesssim \nu$ (see p. 405 of \cite{Wat}), and \eqref{bessel3}
when $R \gg \nu > \nu_0$.

Now, by Schwarz's inequality and \eqref{L2-bessel} we have
\begin{align}\label{lrinfty}
|\mathcal T_R^\nu h (t, r)| \lesssim R^{-\frac{n-2}2} \sup_{r\sim
R}\left(\int_{\r\sim 1}|J_\nu(r\r)|^2\,d\r\right)^\frac12\|h\|_2
\lesssim R^{-\frac {n-1}2}\|h\|_2.
\end{align}
%Hence, this shows \eqref{single} for $(p,q)=(2,\infty)$.
Similarly,
since $|\varpi'(\rho)| \sim 1$, by the change of variables
$\varpi(\r) \mapsto \r$, Plancherel's theorem in $t$ and Schwarz's
inequality we have
\begin{align}\label{lr2}
\|\mathcal T_R^\nu h\|_{L_t^2 {\mathfrak L^2_r}}^2 &\lesssim \int_{\rho \sim 1}
|h(\varpi^{-1}(\rho))|^2\int_{r \sim R}|J_\nu(r\rho)|^2r\,dr\,d\rho.
\end{align}
By \eqref{L2-bessel} and reversing the change of variables we obtain
\eqref{single} for $(p,q)=(2,2)$.

It remains to show \eqref{single} for $(\infty,2)$, and $(\infty,
4)$. For this purpose we use a localization property of $\mathcal
T_R^\nu$ in $t$,  which is possible because $\supp \beta\subset
(1/2,2)$.

\subsection{Temporal localization}

Let $\widetilde \chi_R$ be a measurable function supported $[0, 2R]$
with $\|\widetilde \chi_R\|_\infty\le 1$.  Let $\widetilde \omega
\in C^4(1/2,2)$ and let us define
\begin{align*}
\mathcal R_R^\nu\, g(t,r)= \widetilde \chi_R(r)\int
e^{-it\widetilde{\omega}(\rho)} J_\nu(r\r) \beta(\rho) g(\rho)
d\rho.
\end{align*}
Making use of the localization of scale $R$ in $r$  and the fact that
$\supp \beta\subset (1/2,2)$, it is possible to localize  the estimate for
$\mathcal R_R^\nu$ in $t$ at the same scale $R$. It is crucial for obtaining
sharp estimates for $\mathcal T_R^\nu$.

\begin{lem}\label{local t}  Let $R \gtrsim 1$, $\nu=\nu(k) = \frac{n-2+2k}{2},$ $k=0,1,2,\dots, $ and let $I$ be an interval of length $R$.
Suppose that $|\widetilde \omega'(\r)| \sim 1,\;\; |\widetilde
\omega^{(k)}(\rho)| \lesssim 1,\; k = 2,3,4,$ on the support of
$\beta$.  And suppose that
\begin{equation*}
\|\chi_I(t)\mathcal R_R^\nu\, g\rnmix{q}{p}\le B R^b\,\|g\|_{2}
\end{equation*}
for  $2\le p, q\le \infty$, $B\gtrsim 1$, and $b  \ge  \frac
np+\frac1q-1$. Then $\|\mathcal R_R^\nu\, g\rnmix{q}{p}\le CB
R^b\,\|g\|_{2}.$
\end{lem}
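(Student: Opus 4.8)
The plan is to exploit the fact that the multiplier $e^{-it\widetilde\omega(\rho)}$ combined with the Bessel factor $J_\nu(r\rho)$ has, on the region $r\sim R$ and $\rho\in\supp\beta$, an essentially bounded and slowly varying phase once $t$ ranges over an interval of length $R$; the contribution of distant time intervals is negligible because the oscillation in $\rho$ produces rapid decay. Concretely, I would fix a partition $\{I_j\}$ of $\R$ into intervals of length $R$, so that $\mathcal R_R^\nu g(t,r) = \sum_j \chi_{I_j}(t)\mathcal R_R^\nu g(t,r)$, and then estimate the mixed norm of the sum by the sum (or an $\ell^q$ aggregate) of the pieces. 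Since the hypothesis gives control of each single piece $\chi_{I_j}(t)\mathcal R_R^\nu g$ — by translation invariance in $t$ the bound $BR^b$ is the same for every $j$ after replacing $g(\rho)$ by $e^{-it_j\widetilde\omega(\rho)}g(\rho)$, which has the same $L^2$ norm — the only real issue is summing over $j$ without losing a factor that depends on the number of relevant intervals.

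The key mechanism for the summation is an almost-orthogonality/decay estimate: I would write, for a fixed $t\in I_j$,
\begin{align*}
\chi_{I_j}(t)\mathcal R_R^\nu g(t,r) = \widetilde\chi_R(r)\int e^{-it\widetilde\omega(\rho)}J_\nu(r\rho)\beta(\rho) g(\rho)\,d\rho,
\end{align*}
and split $g = \sum_\ell g_\ell$ where $g_\ell$ is a smooth localization of $g$ to a $\rho$-interval of length $\sim 1/R$ (a "wave packet" decomposition in frequency at scale $1/R$). For such $g_\ell$ the phase $t\widetilde\omega(\rho)$ varies by $O(1)$ over the support, so integrating by parts repeatedly in $\rho$ shows that $\chi_{I_j}(t)\mathcal R_R^\nu g_\ell(t,r)$ is rapidly decaying (in $|t - t_\ell^*|/R$ for the appropriate stationary time $t_\ell^*$) unless $t$ lies in a bounded number of intervals $I_j$; here one uses $|\widetilde\omega'|\sim 1$ together with the bounds on $\widetilde\omega^{(k)}$, $k=2,3,4$, and the derivative estimates on $J_\nu(r\rho)$ for $r\sim R$ (which follow from Lemma \ref{asymp bessel} and the Bessel recurrence relations, noting $\nu = \nu(k)$ is tied to the dimension so the derivative bounds are uniform in the relevant range). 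This reduces the sum over $j$ to an essentially finitely-overlapping one, and then the triangle inequality in $\mathfrak L^p_r$ followed by $\ell^q$ (or $\ell^\infty$) summation in $j$, together with Minkowski when $p \ge q$ or Hölder when needed, closes the estimate; the exponent condition $b \ge \frac np + \frac1q - 1$ is exactly what is needed so that the factor $R^b$ dominates the loss $R^{\frac np + \frac1q - 1}$ coming from Bernstein/Hölder when reassembling the frequency pieces of width $1/R$ on the spatial ball of radius $R$.

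I would organize the argument so that the temporal decay estimate is isolated as the analytic heart: establish that for $g$ supported (after rescaling) in a $\rho$-interval of length $1/R$,
\begin{align*}
\big|\chi_{I_j}(t)\mathcal R_R^\nu g(t,r)\big| \lesssim_N (1 + \operatorname{dist}(I_j, t^*_g)/R)^{-N}\, \sup_{\rho}|g(\rho)|\cdot R^{-?}
\end{align*}
uniformly, where $t^*_g$ is determined by the requirement $t\widetilde\omega'(\rho) = O(1)$ on the support — equivalently $t \sim R$ times a constant. The point is that $r \le 2R$ on $\supp\widetilde\chi_R$ and $\rho\sim 1$, so $\partial_\rho[J_\nu(r\rho)]$ costs only $O(R)$, while $\partial_\rho[t\widetilde\omega(\rho)]$ is $\sim t$; for $|t|\gg R$ repeated integration by parts (integrating the $\rho$-oscillation and differentiating the amplitude, using $|\widetilde\omega'|\sim1$ to control the denominators and $|\widetilde\omega^{(k)}|\lesssim 1$, $k\le 4$, to control its derivatives, and the finite-order stationary phase / nonstationary phase bounds for $J_\nu$) gains arbitrary powers of $R/|t|$.

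The main obstacle I anticipate is making the wave-packet reassembly lossless — i.e., controlling the number of frequency pieces ($\sim R$ of them, each of width $1/R$) without paying $R$ to the full power, which would destroy the sharp exponent. This is precisely where the hypothesis $b \ge \frac np + \frac1q - 1$ enters: one should not sum the $g_\ell$ trivially, but rather note that for a \emph{fixed} time interval $I_j$ only an $O(1)$-separated family of $g_\ell$ can be "active" (their stationary times $t^*_{g_\ell}$ must fall near $I_j$), so the reassembly within each $I_j$ involves only an almost-disjoint collection in $\rho$; the $L^2$ orthogonality in $\rho$ then recovers $\|g\|_2$ rather than $\sum_\ell\|g_\ell\|_2$, and the only genuine loss is the spatial Bernstein factor on a single ball of radius $R$, which is $R^{n(\frac12-\frac1p)}$ in $\mathfrak L^p_r$ — bounded by $R^b$ under the stated hypothesis once combined with the $\frac1q - 1$ coming from the short time interval. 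Getting the bookkeeping of "active packets per time interval" exactly right, so that the final constant $C$ depends only on $n,p,q$ (and not on $\nu$, $R$, or the interval), is the delicate part; everything else is standard stationary-phase and interpolation.
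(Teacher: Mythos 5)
There is a genuine gap, and it sits exactly where you flag the "delicate part." Your scheme decomposes $g$ into frequency packets $g_\ell$ of width $1/R$ and asserts that each packet's output is concentrated near a stationary time $t^*_\ell$ determined by its $\rho$-support, so that only $O(1)$ packets are "active" per time interval. This cannot work for general data: $g$ is only an $L^2$ function, so you are not allowed to integrate by parts against an amplitude containing $g_\ell$, and a frequency-localized piece of an arbitrary $L^2$ function carries no time localization at all. Concretely, take $g_\ell(\rho)=e^{iT\widetilde\omega(\rho)}\psi\bigl(R(\rho-\rho_\ell)\bigr)$ with $T$ arbitrarily large; then $\mathcal R_R^\nu g_\ell$ is essentially supported near $t\sim T$, not near any time dictated by $\rho_\ell$ and $R$, so the "active packets per interval" bookkeeping collapses and with it the lossless reassembly. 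A second, independent problem is your reliance on asymptotics and recurrences for $J_\nu$ to get derivative bounds "uniform in the relevant range": the expansion \eqref{bessel4} is only available for $r\ge 4\nu^{8/5}$, whereas the lemma is needed (and is used later) precisely in the regimes $R\ll\nu$ and $\nu\lesssim R\lesssim\nu^{8/5}$, including the turning-point region $r\rho\sim\nu$ where such uniform symbol bounds are not at hand.

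The paper's proof repairs both issues by first applying Plancherel, i.e. proving the implication for $\mathcal R_R^\nu\widehat g$, and by replacing $J_\nu$ through Schl\"afli's representation \eqref{schlafli}. The non-oscillatory Schl\"afli term $\mathcal R_2$ is exponentially small in $\nu+R$ and is estimated outright with bound $R^{\frac np+\frac1q-1}$ (estimate \eqref{errr}); this, together with the $R^{-1}$ tail term below, is where the hypothesis $b\ge\frac np+\frac1q-1$ actually enters -- not through a Bernstein loss in reassembling frequency pieces, which the correct argument never needs. For the oscillatory part, after the change of variables $\rho\mapsto\widetilde\omega^{-1}(\rho)$ the operator becomes a convolution in time, $\widetilde\chi_R(r)(K_r\ast g)(t)$, with a smooth kernel independent of $g$; integration by parts is performed on $K_r$ alone, using that the $\rho$-derivative of the phase is $\gtrsim|t|$ when $|t|\ge MR$ (the Bessel phase contributes only $r\sin\theta\le 2R$, uniformly in $\nu$), yielding $|K_r(t)|\lesssim R^{-a}(1+|t|)^{-3+a}$ as in \eqref{ker}. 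One then decomposes $g$ in the \emph{time} variable into finitely overlapping pieces $\chi_{\widetilde I_j}g$, applies the hypothesis (which transfers to every interval by the translation invariance of the convolution -- your modulation observation is the correct analogue of this) to the near piece, controls the far piece by the kernel tail, and sums using $q\ge2$ and $\sum_j\|\chi_{\widetilde I_j}g\|_2^2\lesssim\|g\|_2^2$. So while your overall philosophy (decay for $|t|\gg R$ plus patching translated copies of the local estimate) matches the paper, the mechanism you propose to implement it does not survive the roughness of $L^2$ data or the uniformity in $\nu$, and the convolution-structure reduction via Plancherel together with the Schl\"afli splitting is the missing idea.
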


\noindent Note that $\mathcal T_R^\nu
h(r,t)=r^{-\frac{n-2}2}\mathcal R_R^\nu h(r,t)$ with a proper
$\widetilde \chi_R$ if $\varpi=\w\omega$. From Section \ref{neccon}
we see that a lower bound for  $\mathcal T_R^\nu: L^2\to L^q_tL^p_x$
is $CR^{\frac1q+\frac{2n-1}{2p}-\frac34}$ which is bigger than or
equal to $CR^{\frac np+\frac1q-1}$  if $p\ge 2$ and $R$ is large
enough. So we can use this localization for $\mathcal T_R^\nu$
without any loss in bound.

\begin{proof}[Proof of Lemma \ref{local t}]
Note that $\nu = 0$ or $\nu \ge 1/2$ from the definition of
$\nu(k)$. By Plancherel's theorem it suffices to show that
\begin{equation}\label{modi}
\|\chi_I(t)\mathcal R_R^\nu\, \widehat g\rnmix{q}{p}\le B
R^b\,\|g\|_{2}
\end{equation}
for $B\gtrsim 1$, $b  \ge  \frac np+\frac1q-1$ implies \[\|\mathcal
R_R^\nu\, \widehat g\rnmix{q}{p}\le CB  R^b\,\|g\|_{2}.\]

Using the integral representation \eqref{schlafli} of Bessel
function  we write
\begin{equation}
\label{repre}
 \mathcal R_R^\nu\, \widehat g = \mathcal R_1 g + \mathcal R_2 g,
\end{equation}
where
\begin{align*}
\mathcal R_1 g&= \frac{\widetilde\chi_R(r)}{\pi} \int_0^{\pi} \int
e^{-it\widetilde\omega(\r)}  e^{i(r\r\sin \theta - \nu
\theta)} \,\beta(\r)\widehat g(\rho)\, d\rho\,d\theta,\\
\mathcal R_2 g&=- \frac{\sin (\nu \pi)}{\pi} \widetilde\chi_R(r)
\int_0^\infty \int
 e^{-it\widetilde\omega(\r)}   e^{-\nu \tau - r\r \sinh \tau}\,\beta(\r)\widehat g(\rho)\, d\rho\,d\tau.
\end{align*}
Note that if $\nu = 0$, then $\mathcal R_2 = 0$.

 If $\nu \ge 1/2$, then we claim that for  $2\le p, q\le \infty$
\begin{equation}\label{errr} \|\mathcal R_2 g\rnmix{q}{p}\le CR^{\frac np+\frac1q-1}\|g\|_{2}.
\end{equation}
In view of interpolation, it is sufficient to consider the cases
$(p,q)=(2,2), (2,\infty), (\infty,2),$ and $ (\infty,\infty)$. By
H\"older's inequality we only need to show \eqref{errr} for $(p,q)=
(\infty,2), (\infty,\infty).$ Using the fundamental theorem of
calculus and H\"older's inequality, we have
\begin{align*}
\sup_{0\le r\le 2R} |\mathcal R_2 g(t,r)|
&\le |\mathcal R_2 h(t,3R/2)|+\int_{R}^{2R}|\partial_r \mathcal R_2 g(t,r)| dr\\
&\le |\mathcal R_2 h(t,3R/2)|+ CR^\frac12 \|\partial_r \mathcal R_2
g(t,\cdot)\|_{L^2(0, 2R)}.
\end{align*}
Since $|\widetilde \omega'(\r)|\sim 1$, by the change of variables
$\widetilde\omega(\r)\mapsto \r$ and Plancherel's theorem
\begin{align*}\
\|\mathcal R_2 g(t,3R/2)\|_{L^2_t} &\le   C \int_0^\infty
e^{-\nu\tau} e^{-\frac32R\sinh \tau} d\tau \|
 \,g\|_{2} \le \frac{C}{\nu + R}\| \,g\|_{2}.
  \end{align*}
To handle $\|\partial_r \mathcal R_2 g(t,\cdot)\|_{L^2(R,2R)}$,
observe that
\begin{align*}\partial_r \mathcal R_2 g(t,r)=  \frac{\sin (\nu \pi)}{\pi} \widetilde\chi_R(r) \int
 e^{-it\rho} \Big(  \int_0^\infty   e^{-\nu \tau - r\phi(\rho) \sinh \tau} \sinh \tau\, d\tau \Big)\\
 \times  \phi(\rho) \beta(\phi(\r))\widehat g(\phi(\rho))\phi'(\r)\, d\rho,\end{align*}
 where $\phi = \widetilde \omega^{-1}$.
Since $\phi'(\r) \sim 1$, by Plancherel's theorem again  we have
  \begin{align*}
  \|\partial_r \mathcal R_2 g\|_{L_t^2L^2_{dr}(R, 2R)} &\le C \int_0^\infty e^{-\nu\tau} \|e^{-r\sinh \tau}\|_{L^2_{dr}(0, 2R)} \sinh \tau\, d\tau \|
 \, g\|_{2}\\
 &\le C\int_0^\infty e^{-\nu\tau} e^{-R \sinh \tau} (\sinh \tau)^\frac12\, d\tau \|
 \, g\|_{2}\\
 &\le \frac{C}{\nu-1/2 + R}\|\, g\|_{2}.
 \end{align*}
Since $\nu \ge 1/2$, we get the desired estimate \eqref{errr} for
$(p,q)=(2,\infty)$. The estimate for $(p,q)=(\infty,\infty)$ is
straightforward because
\[|\mathcal R_2 g|\le C\int_0^\infty
 e^{-\nu \tau - \frac{R}2 \sinh \tau}d\tau \|g\|_2.\]

By \eqref{repre}, \eqref{errr}, and \eqref{modi} it is now enough to
show that $\|\chi_I(t)\mathcal R_1 g\rnmix{q}{p}\le BR^b\|g\|_{2}$
implies $\|\mathcal R_1 g\rnmix{q}{p}\le CBR^b \|g\|_{2}.$ Since
$|\widetilde \omega'(\r)|\sim 1$, by the change of variables
$\rho\to \phi(\r) = \widetilde \omega^{-1}(\r)$ and Plancherel's
theorem,  we may replace $\mathcal R_1$ with $\w {\mathcal R_1}$
which is given by
\[\w {\mathcal R_1} g= \chi_R(r)\int_0^{\pi} \int e^{-it\rho}  e^{i(r\phi(\r)\sin \theta - \nu
\theta)} \,\widetilde\beta(\r)\widehat g(\rho)\, d\rho\,d\theta\]
for some $\widetilde\beta\in C_c^\infty(1/2,2)$. The matter reduces
to showing that
 \begin{equation}\label{siii}
\|\chi_I(t)\w {\mathcal R_1} g\rnmix{q}{p}\le BR^b\|g\|_{2}
\end{equation}
for $b\ge -1$ implies  \begin{equation} \label{sii} \|\w{\mathcal
R_1} g\rnmix{q}{p}\le CBR^b \|g\|_{2}.
\end{equation}
We note that
\[\w{\mathcal R_1} g(t,r)= \widetilde\chi_R(r) (K_{r}\ast g)(t)\]
 with
\[K_{r}(t)=\frac{1}{2\pi}\int_{0}^{\pi}\!\!\!\int  e^{-i(t\r-r\phi(\r)\sin\theta)} \widetilde\beta(\r)d\rho \,\,e^{-i\nu\theta} d\theta.
\]
  Since $0\le r\le 2R$ and
$|\frac{d}{d\rho}  (t\widetilde \omega(\r)-r\r\sin\theta)|\ge C|t|$
for some $C>0$ if $|t|\ge M R$ for some large $M \gg 1$, from the
condition on $\w\omega$ and  integration by parts (three times) it
follows that for $0\le a\le 3,$
\begin{equation}\label{ker}
| K_r(t)|\le C(1+|t|)^{-3} \le CR^{-a}(1+|t|)^{-3+a}
\end{equation}
if $r\sim R$ and $|t| \ge M R$. Now the argument is rather standard.
Indeed, let $\{I\}$ be a collection of disjoint intervals with
sidelength $\sim R$ which partition $\mathbb R$. Let us denote by
$\widetilde I$ the interval $\{t: \dist(t, I) \le 5MR\}$. Breaking
$g=\chi_{\widetilde I} g+\chi_{\widetilde I^c} g$, by \eqref{ker} we
see that for $t \in I$
\[  |\w{\mathcal R_1} g(t,r)|\le |\w{\mathcal R_1} (\chi_{\widetilde I}\, g)(t,r)| +CR^{-a} \mathcal E\ast |g|(t), \]
where $\mathcal E(t)=(1+|t|)^{-3+a}$. Thus, when $q\neq \infty$,
taking $a=1$, we see that
\begin{align*}
\int_{\mathbb R} \|\mathcal R_1 g(t,\cdot)\|_{\mathfrak L_r^p}^q dt &
\le \sum_{I} \int_{I} \|\mathcal R_1 g(t,\cdot)\|_{\mathfrak L_r^p}^q\,dt\\
&\le C\sum_{I} \int_{I} \|\mathcal R_1  (\chi_{\widetilde I}\, g)\|_{\mathfrak L_r^p}^q\,dt + C R^{-q}\int_{\mathbb R}(\mathcal E\ast | g|)^q(t)\,dt \\
&\le CB^q R^{qb}\sum_{I} \|\chi_{\widetilde I}\, g\|_2^q +
CR^{-q}\|g\|_2^q
\\
&\le CB^q R^{qb}\|g\|_2^q .
\end{align*}
 For the third inequality we use the hypothesis \eqref{siii} and the fact that
 $\mathcal E\in L^1\cap L^\infty$ and for the last inequality we use
 the fact that $b  \ge  \frac np+\frac1q-1$. Hence summation along $I$ gives the
desired estimate \eqref{sii}. When $q = \infty$, the argument is
even simpler. We omit the  detail.
\end{proof}

 Now let us set
\begin{equation}\label{rdef}
\mathcal R_\Omega g(t,r)= \widetilde \chi_R(r)\int
e^{-it\widetilde{\omega}(\rho)} \Omega(r\r) \beta(\rho) g(\rho)
d\rho.
\end{equation}
 From the proof of Lemma \ref{local t} it is obvious that the same statement remains valid even if we replace $R_R^\nu$ by
$\mathcal R_{\Psi_\nu}g$ provided that $R\ge 4\nu^\frac85$. Here
$\Psi_\nu(r)= J_\nu(r) -[ (c_\nu r^{-1/2} + \widetilde{c_\nu}\,
\nu^2\,r^{-\frac32}) e^{ir} + \mathcal C.\mathcal C]$ which is given
in \eqref{bessel4}.  In fact, since we already have \eqref{errr},
one needs  to check that
\begin{align*}
K_{r}(t)= \frac{1}{2\pi}\int  e^{-it\r} \bigg(\int_{0}^{\pi}\!\!\! &e^{i(r\phi(\r)\sin\theta-\nu\theta)} d\theta\\
 &-\bigg[ \Big(\frac{c_\nu} {(\phi(\r) r)^{\frac12} }+ \frac{\widetilde{c_\nu}\nu^2}{(\phi(\r) r)^{\frac32}}\Big) e^{ir\phi(\r)} + \mathcal C.\mathcal C\bigg]\bigg)\beta(\r)d\rho
\end{align*}
satisfies \eqref{ker} if $r\sim R$, $|t| \ge M R$.  It is easy to
see this by making use of the fact that $R\ge 4\nu^\frac85$. One can
handle each term separately. Then the rest of the argument is
straightforward. The similar implication is also valid for $\mathcal
R_\Omega$ with $\Omega= \r^{-\frac12} e^{\pm i\rho}, $
$\nu^2\r^{-\frac32} e^{\pm i\rho}.$ For future use we summarize it
as follows.

\begin{lem}\label{hh}
 Let $R \gtrsim 1$, $\nu$,  $I$ and $\w \omega$  be the same as in Lemma \ref{local t}
 and let $\m R_\Omega$ be defined by \eqref{rdef} with $\Omega=\r^{-\frac12} e^{\pm i\rho}, $
$\nu^2\r^{-\frac32} e^{\pm i\rho}$, $\Psi_\nu$, repectively.
  Suppose that $R\ge 4\nu^\frac85$ and $\|\chi_I(t)\mathcal R_\Omega\, g\rnmix{q}{p}\le B R^b\,\|g\|_{2}$ holds
for $B \gtrsim 1$ and $2\le p, q\le \infty$, and $b  \ge  \frac
np+\frac1q-1$. Then $\|\mathcal R_\Omega\, g\rnmix{q}{p}\le CB
R^b\,\|g\|_{2}$.
\end{lem}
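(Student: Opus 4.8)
The plan is to repeat the proof of Lemma~\ref{local t} almost verbatim; the only genuinely new point is to check that, for each of the three choices of $\Omega$, the convolution kernel produced after the change of variables $\r\mapsto\phi(\r)=\w\omega^{-1}(\r)$ satisfies the pointwise decay bound~\eqref{ker}. Once that is in hand, the time-partition argument from the end of the proof of Lemma~\ref{local t} carries over word for word: partition $\mathbb R$ into intervals $\{I\}$ of length $\sim R$, put $\w I=\{t:\dist(t,I)\le 5MR\}$, split $g=\chi_{\w I}g+\chi_{\w I^c}g$, estimate the near piece by the hypothesis and the far piece by convolution against $(1+|t|)^{-3+a}\in L^1\cap L^\infty$ (taking $a=1$ when $q\neq\infty$), and sum in $I$ using $q\ge 2$ together with $b\ge\frac np+\frac1q-1$; the case $q=\infty$ is even easier.

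For $\Omega=\r^{-1/2}e^{\pm i\r}$ and $\Omega=\nu^2\r^{-3/2}e^{\pm i\r}$, after Plancherel in $t$ and the change of variables one gets $\mathcal R_\Omega\,\widehat g(t,r)=\w\chi_R(r)(K_r\ast g)(t)$ with $K_r(t)=c\int e^{-it\r}\,a_r(\r)\,e^{\pm ir\phi(\r)}\,\w\beta(\r)\,d\r$, where $a_r(\r)=(r\phi(\r))^{-1/2}$ in the first case and $a_r(\r)=\nu^2(r\phi(\r))^{-3/2}$ in the second (and $\w\beta$ absorbs $\beta\circ\phi$ and $\phi'$). In the first case $a_r$ and all its $\r$-derivatives are $O(1)$ since $r\gtrsim1$ and $\phi\in C^4$ with $\phi(\r)\sim1$ on $\supp\w\beta$; in the second case the hypothesis $R\ge 4\nu^{8/5}$ gives $\nu^2 r^{-3/2}\lesssim R^{5/4-3/2}\lesssim1$, so again $a_r$ and its $\r$-derivatives are $O(1)$ uniformly in $\nu,R$. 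The phase $-t\r\pm r\phi(\r)$ has $\r$-derivative $-t\pm r\phi'(\r)$ with $|r\phi'(\r)|\lesssim R$, hence of size $\gtrsim|t|$ once $|t|\ge MR$ for $M$ large; three integrations by parts then give $|K_r(t)|\lesssim(1+|t|)^{-3}$ for $|t|\ge MR$, which is~\eqref{ker}.

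For $\Omega=\Psi_\nu$ I would insert~\eqref{bessel4}, writing $\Psi_\nu(r\r)=J_\nu(r\r)-\big[\big(c_\nu(r\r)^{-1/2}+\w{c_\nu}\,\nu^2(r\r)^{-3/2}\big)e^{ir\r}+\mathcal C.\mathcal C\big]$, and then use the Schl\"afli representation~\eqref{schlafli} of $J_\nu$. The second Schl\"afli integral yields an $\mathcal R_2$-type operator which, exactly as in the derivation of~\eqref{errr} in the proof of Lemma~\ref{local t} (and that derivation uses only $\nu\ge1/2$, not $R\ge4\nu^{8/5}$), is already bounded $L^2\to L^q_t\mathfrak L^p_r$ with norm $\lesssim R^{\frac np+\frac1q-1}\le R^b$, so it is absorbed into $BR^b\|g\|_2$ with no localization needed. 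The rest equals $\w\chi_R(r)(K_r\ast g)(t)$ with $K_r$ the kernel displayed just before the statement, namely the first Schl\"afli integral in $\theta$ minus the two explicit terms; I would check~\eqref{ker} for each of the three pieces separately. For the $\theta$-integral the $\r$-phase is $-t\r+r\phi(\r)\sin\theta$, whose $\r$-derivative $-t+r\phi'(\r)\sin\theta$ is still $\gtrsim|t|$ for $|t|\ge MR$ uniformly in $\theta\in[0,\pi]$ while the amplitude is $O(1)$; the two explicit pieces are handled exactly as in the previous paragraph, the factor $\nu^2(r\phi(\r))^{-3/2}$ being $O(1)$ thanks to $R\ge4\nu^{8/5}$. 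Hence $|K_r(t)|\lesssim(1+|t|)^{-3}$ for $|t|\ge MR$, so~\eqref{ker} holds and the globalization argument finishes the proof.

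The only obstacle, and it is a mild one, is the bookkeeping of the $\nu$-dependence in the amplitude $\nu^2(r\phi(\r))^{-3/2}$ and its $\r$-derivatives: this is precisely where $R\ge4\nu^{8/5}$ is used, turning those factors into quantities bounded uniformly in $\nu$ and $R$, which is what makes the integration by parts clean. Apart from that, every step mirrors the corresponding step in the proof of Lemma~\ref{local t}.
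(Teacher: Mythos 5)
Your proposal is correct and follows essentially the same route as the paper: the paper likewise reduces the lemma to checking the kernel bound \eqref{ker} for each term separately (handling the second Schl\"afli integral via \eqref{errr}, which indeed only needs $\nu\ge 1/2$), using $R\ge 4\nu^{8/5}$ to tame the $\nu^2(r\phi(\r))^{-3/2}$ amplitude, and then running the same temporal-partition argument as in Lemma \ref{local t}. Your write-up simply supplies the details the paper leaves as "easy to see."
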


We now return to the proof of Proposition \ref{linear}.

\subsection{Proof of \eqref{single} for  $(p,q)=(\infty, 2), $ $(\infty,4)$.}
We show that for $q=2,$ $4$,
\begin{equation}\label{qq}
\|\mathcal T_R^\nu h\rnmix{q}{\infty}\le C
\lambda_0^{-\frac14}(1+\nu)^{\frac25}
R^{\frac1q-\frac{2n-1}4}\|h\|_2.
\end{equation}
The estimate for  $q=2$ follows from the case $q=4$. In fact, by
Lemma \ref{local t} it is sufficient to show \eqref{qq} for $q=2$
when the time integration is taken over an  interval $I$ of side
length $\sim R$ but this follows from H\"older's inequality and the
estimate \eqref{qq} with $q=4$. Hence we need only to show that
\begin{equation}\label{l4}
\|\mathcal T_R^\nu h\rnmix{4}{\infty}\le C
\lambda_0^{-\frac14}(1+\nu)^{\frac25} R^{-\frac{n-1}2}\|h\|_2.
\end{equation}
For this we consider the following three cases, separately:
\begin{equation}
\label{cases} (1):  R\ll \nu, \ \  (2): \nu\lesssim R\lesssim
\nu^\frac85, \  \ (3):  \nu^\frac85\ll R.
\end{equation}

\noindent{\it Case $(1)$.} From \eqref{bessel1}  we have $\|\mathcal T_{R}^\nu h \rnmix{\infty}{\infty} \lesssim e^{-C\nu}R^{-\frac{n-2}2}\|h\|_2$. By Lemma \ref{local t} we get
\[\|\mathcal T_{R}^\nu h\rnmix{4}{\infty}\lesssim e^{-C\nu}R^{\frac14-\frac{n-2}2}\|h\|_2 \lesssim e^{-C\nu}\nu^{\frac34} R^{-\frac{n-1}2}\|h\|_2 \lesssim R^{-\frac{n-1}2}\|h\|_2,\]
which is acceptable.

\smallskip

\noindent{\it Case $(2)$}. By \eqref{lrinfty} we have $\|\mathcal T_{R}^\nu h \rnmix{\infty}{\infty}\lesssim R^{-\frac{n-1}2}\|h\|_2$. Then, by  Lemma \ref{local t}  and H\"older's inequality $\|\mathcal T_{R}^\nu h\rnmix{4}{\infty}\le CR^{\frac14-\frac{n-1}2}\|h\|_2$. So, if $R \sim \nu$, then
$\|\mathcal T_{R}^\nu h\rnmix{4}{\infty}\lesssim \nu^\frac14 R^{-\frac{n-1}2}\|h\|_2$. If $\nu \ll R \lesssim \nu^\frac85$, then
 $\|\mathcal T_{R}^\nu h\rnmix{4}{\infty}\lesssim \nu^\frac25 R^{-\frac{n-1}2}\|h\|_2$. So we get \eqref{l4}.

\smallskip

\noindent{\it Case $(3)$.}
For simplicity let us set
\begin{equation}\label{tom}
\m T_\Omega h(t,r)=r^{-\frac{n-2}2}\int e^{-it\varpi(\rho)} \Omega(r\rho)\beta(\rho)h(\rho) d\rho,
\end{equation}
and
\[\mathcal T_{\Omega, R} h(t,r)= \chi_R(r)\m T_\Omega h(t,r).\]
Since $r\sim R$,  using \eqref{bessel4}  we  need to consider
$\mathcal R_{\Omega}$ with
 \[\Omega(\rho)=\rho^{-\frac12} e^{\pm i\rho}, \ \nu^2 \rho^{-\frac32} e^{\pm i\rho}, \ \Psi_\nu(\r)=O(1/\rho)\]
and for \eqref{l4}  it is sufficient to show that
\[\|\chi_I(t) \mathcal T_{\Omega, R} h\rnmix{4}{\infty}\le C\lambda_0^{-\frac14}(1+\nu)^\frac25 R^{-\frac{n-1}2}\|h\|_2.\]
For $\Omega(\rho)=O(1/\rho)$, by Schwarz's inequality $|\mathcal
T_{\Omega, R} h(t,r)| \le CR^{-\frac n2}\|h\|_2$. So we get the
required bound from H\"older's inequality. Hence we only need to
consider the cases $\Omega(\rho)=\rho^{-\frac12} e^{\pm i\rho},$
$\nu^2 \rho^{-\frac32} e^{\pm i\rho}$. These two cases can be
handled similarly. In fact, since $\nu^\frac85\ll R$,
 we get the desired bound \eqref{l4}\footnote{The bound $\nu^\frac{8}{5}$ is actually decided by
the term $\nu^2 \rho^{-\frac32} e^{\pm i\rho}$. } if we show that
\[\|{\mathcal T} h\rnmix{4}{\infty}\le C\lambda_0^{-\frac14}\|h\|_2,\]
where
\[ {\mathcal T}h(r,t)=\w\chi_R(r)\int e^{i(-t\varpi(\rho)\pm r\rho)} \beta(\rho) h(\rho) d\rho.\]
By duality it is equivalent to
$\|{\mathcal T}^* H\|_{L^2}\le C\lambda_0^{-\frac14}\|H\rnmix{\frac43}{1},$
where $\mathcal T^*$ is the adjoint operator of $\mathcal T$. It again follows from
\[ \|\mathcal T\mathcal T^* H\rnmix{4}{\infty}\le C\lambda_0^{-\frac12} \|H\rnmix{\frac43}{1}.\]
Now note that
\begin{align*}
\mathcal T\mathcal T^* H(t,r)=\iint\mathcal K(t-s, r,r') [r'^{(n-1)}H(s,r')]dsdr',
\end{align*}
where
\[\mathcal K(t,r)=\w\chi_R(r)\w\chi_R(r') \int e^{i(-t\varpi(\rho)\pm (r-r')\rho)} \beta^2(\rho)d\rho.\]
Since $\lambda_0 \le |\varpi''(r)| \lesssim 1$, by van der Corput (see for instance page 334 of \cite{st}), it follows that
$
|\mathcal K(t,r)|\le
C\lambda_0^{-\frac12}|t|^{-1/2}.$
So  we get
\[\|\mathcal T\mathcal T^* H\rnmix{4}{\infty}\le C\lambda_0^{-\frac12}\bigg\|\int |t-s|^{-\frac12} \|H(\cdot, s)\|_{\mathfrak L_r^1} ds\bigg\|_{L^4_t}.\]
Then by Hardy-Littlewood-Sobolev inequality we get the desired
bound. This completes the proof of \eqref{single}, and hence
Proposition \ref{linear}. \qed

\begin{rk}\label{remark} From the  above proof ({\it Case }(3)) it is obvious
that if  $\Omega = \rho^{-\frac12} e^{\pm i\rho}$, $\Psi_\nu(\r)=O(1/\rho)$, then for $R\gtrsim 1$, $2\le p,q\le \infty$ and $2/q\ge 1/2-1/p$,
\begin{equation}\label{nunu1}
\|\mathcal  T_{\Omega, R} h\rnmix{q}{p}\lesssim
\lambda_0^{-\frac12(\frac12-\frac1p)}R^{\frac1q-\frac{2n-1}2(\frac12-\frac1p)}\|h\|_2,
\end{equation}
and if $\Omega = \nu^2 \rho^{-\frac32} e^{\pm i\rho}$,  for $2\le q\le 4$
\begin{equation}
\label{ome1}\|\mathcal  T_{\Omega, R} h\rnmix{q}{\infty}\lesssim
\lambda_0^{-\frac14} \nu^2 R^{-1}R^{\frac1q-\frac{2n-1}4}\|h\|_2.
\end{equation}
By  \eqref{single},  \eqref{bessel4} and \eqref{nunu1} for $\Omega = \rho^{-\frac12} e^{\pm i\rho}$, $\Psi_\nu(\r)=O(1/\rho)$, we also have for  $\Omega = \nu^2 \rho^{-\frac32} e^{\pm i\rho}$,
\begin{equation}\label{ome2}
\|\mathcal  T_{\Omega, R} h\rnmix{q}{2}\lesssim R^\frac1q\|h\|_2.
\end{equation}
Hence, by  interpolation between \eqref{ome1} and \eqref{ome2}  we see that if  $\Omega = \nu^2 \rho^{-\frac32} e^{\pm i\rho}$,
\begin{align}\label{nunu0}
\|\mathcal  T_{\Omega, R} h\rnmix{q}{p}\lesssim
\lambda_0^{-\frac12(\frac12-\frac1p)} (\nu^2
R^{-1})^{1-\frac2p}R^{\frac1q-\frac{2n-1}2(\frac12-\frac1p)}\|h\|_2
\end{align}
provided that  $2\le p,q\le \infty$ and $2/q\ge 1/2-1/p$. Hence,
when $R\ge 2\nu^2$,
 one gets  uniform bounds so that  if $\Omega = \nu^2 \rho^{-\frac32} e^{\pm i\rho}$,
$\rho^{-\frac12} e^{\pm i\rho}$, $\Psi_\nu(\r)=O(1/\rho)$,
\begin{equation}\label{nunu}
\|\mathcal  T_{\Omega, R} h\rnmix{q}{p}\lesssim
\lambda_0^{-\frac12(\frac12-\frac1p)}R^{\frac1q-\frac{2n-1}2(\frac12-\frac1p)}\|h\|_2,
\end{equation}
whenever $2\le p,q\le \infty$ and $2/q\ge 1/2-1/p$. \end{rk}

\subsection{An improvement on angular regularity} In what follows we
improve the bound in $\nu$  but  at the expense of losing a power of
$\lambda_0$ in the bound. This is why we need the extra condition on
$\omega$ in Theorem \ref{opt2}.

\begin{prop} \label{linear-2} Let $R\gtrsim 1$ and $\mathcal T_R^\nu$ be defined by \eqref{th-dfn}.  If
$2\le p,q\le \infty$, $\nu \ge 0$, and $1/q\ge 1/2-1/p$, then there
is a constant $C = C(n, p, q) > 0$, independent of $\lambda_0, \nu,$
$R$, such that
\begin{equation}\label{single-2}\|\mathcal T_R^\nu h\rn\le
C \lambda_0^{-(\frac{1}{2}-\frac1p)}(1+\nu)^{\frac12(\frac12 -
\frac1p)}
R^{\frac1q+\frac{2n-1}{2}(\frac1p-\frac12)}\|h\|_2.
\end{equation}
\end{prop}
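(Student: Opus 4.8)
The plan is to follow the proof of Proposition \ref{linear} almost verbatim, the one essential change being to use the representation \eqref{bessel2nu1} of $J_\nu$, which is valid on the range $r\gtrsim\nu$, in place of the expansion \eqref{bessel4}, which is valid only for $r\gtrsim\nu^{8/5}$; the larger range of validity is exactly what upgrades the factor $(1+\nu)^{\frac45(\frac12-\frac1p)}$ to $(1+\nu)^{\frac12(\frac12-\frac1p)}$, while the worse power $\lambda_0^{-(\frac12-\frac1p)}$ will arise because we estimate an oscillatory integral by van der Corput's lemma using only the lower bound $|\varpi''|\ge\lambda_0$ from \eqref{cond ome 1st}. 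By complex interpolation it suffices to establish \eqref{single-2} at the three vertices $(p,q)=(2,\infty),(2,2),(\infty,2)$ of the region $\{2\le p,q\le\infty,\ 1/q\ge1/2-1/p\}$. The vertices $(2,\infty)$ and $(2,2)$ are handled exactly as in the proof of \eqref{single}, via Parseval's identity for the Hankel transform and via the square function estimate \eqref{L2-bessel} respectively; these contribute no $\nu$- or $\lambda_0$-dependence, and moreover the claimed exponents in \eqref{single-2} coincide there with those in \eqref{single}. So the whole content is the endpoint $(p,q)=(\infty,2)$, namely
\[
\|\mathcal T_R^\nu h\rnmix{2}{\infty}\le C\lambda_0^{-\frac12}(1+\nu)^{\frac14}R^{\frac{3-2n}4}\|h\|_2 ,
\]
which I would treat separately in the regimes $R\lesssim\nu$ and $R\gg\nu$.

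In the regime $R\lesssim\nu$ a crude argument suffices. For fixed $r\sim R$, the substitution $\varpi(\rho)\mapsto\rho$ (legitimate since $|\varpi'|\sim1$) and Plancherel's theorem reduce the $L^2_t$-norm of $\int e^{-it\varpi(\rho)}J_\nu(r\rho)\beta(\rho)h(\rho)\,d\rho$ to $\big(\int_{\rho\sim1}|J_\nu(r\rho)|^2|h(\rho)|^2\,d\rho\big)^{1/2}$, and likewise for its $\partial_r$-derivative after replacing $J_\nu$ by $J_\nu'=\tfrac12(J_{\nu-1}-J_{\nu+1})$. Bounding $\sup_{r\sim R}|\cdot|^2$ by the fundamental theorem of calculus in $r$ and then using Cauchy--Schwarz and \eqref{L2-bessel} for $J_\nu$ and $J_{\nu\pm1}$ gives $\|\mathcal T_R^\nu h\rnmix{2}{\infty}\lesssim R^{-\frac{n-2}2}\|h\|_2$; since $R^{-\frac{n-2}2}=R^{\frac14}R^{\frac{3-2n}4}$ and $R^{\frac14}\lesssim(1+\nu)^{\frac14}\le\lambda_0^{-\frac12}(1+\nu)^{\frac14}$ when $R\lesssim\nu$, this is exactly the desired bound.

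In the regime $R\gg\nu$ one has $r\rho\gg\nu$ on the support of the integrand, so \eqref{bessel2nu1} writes $J_\nu(r\rho)$ as the sum of two oscillatory terms with amplitude $(r^2\rho^2-\nu^2)^{-1/4}\sim R^{-1/2}$ and phases $\pm\theta(r\rho)$, plus a remainder $h_\nu(r\rho)=O((r\rho)^{-1})$. The remainder is a symbol of the type covered by Lemma \ref{hh}, so it enjoys the temporal localization there; together with the pointwise bound $|\cdots|\lesssim R^{-\frac{n-2}2}R^{-1}\|h\|_2$ and H\"older over an interval of length $\sim R$, its contribution is $\lesssim R^{-\frac14}R^{\frac{3-2n}4}\|h\|_2$, which is acceptable. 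For each oscillatory term, writing $a(r,\rho)=2c_\nu(r^2\rho^2-\nu^2)^{-1/4}$ (so $|a|$ and its $\rho$-derivatives are $\sim R^{-1/2}$), I would run a $TT^*$ argument as in Case $(3)$ of the proof of \eqref{single}. The phase of the $\rho$-integral in the $TT^*$ kernel is $\psi(\rho)=-t\varpi(\rho)+\theta(r\rho)-\theta(r'\rho)$ ($t$ now the time difference); since $|\varpi'|\sim1$ and $|\theta'|\le1$, integration by parts as in \eqref{ker} localizes the kernel to $|t|\lesssim R$, while on that range van der Corput's lemma applied with $|\psi''(\rho)|\ge\lambda_0|t|-Cr^2|\theta''(r\rho)|$ and the bound $|\theta''(s)|\lesssim\nu^2 s^{-3}$ for $s\gg\nu$ produces a gain of $(\lambda_0|t|)^{-1/2}$ as long as $\lambda_0|t|\gtrsim\nu^2/R$, the trivial bound being used when $\lambda_0|t|\lesssim\nu^2/R$. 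Keeping track of the $r$-weights (the $\mathfrak L^\infty_r$--$\mathfrak L^1_r$ duality), the effective $t$-kernel then has $L^1_t$-norm $\lesssim R^{-(n-1)}\big(\nu^2\lambda_0^{-1}R^{-1}+\lambda_0^{-1/2}R^{1/2}\big)$, so Young's inequality in $t$ gives $\|\mathcal T_R^\nu h\rnmix{2}{\infty}^2\lesssim R^{-(n-1)}\big(\nu^2\lambda_0^{-1}R^{-1}+\lambda_0^{-1/2}R^{1/2}\big)\|h\|_2^2$. Taking square roots, the second term contributes $\lambda_0^{-1/4}R^{\frac{3-2n}4}\|h\|_2$, and the first contributes $\lambda_0^{-1/2}\nu R^{-n/2}\|h\|_2=(\nu/R)^{3/4}\,\lambda_0^{-1/2}\nu^{1/4}R^{\frac{3-2n}4}\|h\|_2$; both are $\le\lambda_0^{-1/2}(1+\nu)^{1/4}R^{\frac{3-2n}4}\|h\|_2$ because $R\gg\nu$, which proves \eqref{single-2}.

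The step I expect to be the main obstacle is the van der Corput estimate for the $TT^*$ kernel in the range $\nu\ll R\lesssim\nu^2$, where the curvature $r^2\theta''(r\rho)\sim\nu^2/R$ of the Bessel phase is no longer negligible against $\lambda_0$, so that $|\psi''|$ cannot simply be bounded below by $\lambda_0|t|$; this is handled by passing to the trivial bound on the short interval $\lambda_0|t|\lesssim\nu^2/R$, as indicated. It is precisely this competition between $\nu^2/R$ and $\lambda_0$ that explains why \eqref{single-2} is of use only under the extra hypothesis \eqref{simsim} in Theorem \ref{opt2} --- after the dyadic rescaling \eqref{simsim} forces $\lambda_0\sim1$, which makes the loss $\lambda_0^{-(\frac12-\frac1p)}$ harmless. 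A minor further point, the temporal localization for the non-oscillatory remainder $h_\nu$ and the uniform control of the errors of \eqref{bessel2nu1} for $R\gg\nu$, is routine in view of Lemma \ref{hh} and Remark \ref{remark}.
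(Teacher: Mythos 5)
Your proposal is correct and follows essentially the same route as the paper's proof: reduce by interpolation to the vertex $(p,q)=(\infty,2)$, dispose of $R\lesssim \nu$ crudely, and for $R\gg\nu$ use the representation \eqref{bessel2nu1} in a $TT^*$ argument where van der Corput is applied with the Bessel-phase curvature $r^2\theta''(r\rho)=O(\nu^2/R)$ absorbed by restricting to $\lambda_0|t|\gtrsim \nu^2/R$ (trivial bound otherwise) and a Schur/Young bound over $|t|\lesssim R$, which is exactly how the paper produces the loss $\lambda_0^{-1/2}(1+\nu)^{1/4}$ (the paper obtains the temporal localization via Lemma \ref{local t} applied to the full operator before decomposing, and treats $R\gg\nu^2$ by citing \eqref{nunu}, but these are inessential variations). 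The one small inaccuracy is your appeal to Lemma \ref{hh} for the remainder $h_\nu$ of \eqref{bessel2nu1}: that lemma is stated for $\Psi_\nu$ and requires $R\ge 4\nu^{8/5}$, so in the range $\nu\ll R\lesssim \nu^{8/5}$ you should instead localize in time as the paper does, by applying Lemma \ref{local t} to $\mathcal T_R^\nu$ itself first, after which the crude bound for the $h_\nu$ contribution together with H\"older over an interval of length $\sim R$ gives your stated $R^{-1/4}R^{\frac{3-2n}{4}}\|h\|_2$.
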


\begin{proof}
\label{improve} From the proof Proposition \ref{linear} (see
\eqref{lrinfty} and \eqref{lr2}) and Remark \ref{remark} (see
\eqref{nunu}) we recall that the estimates \eqref{single-2} for
$(p,q)=(2,2),$ $(2,\infty)$ are already obtained. Hence, for the
proof of Proposition \ref{linear-2} it is sufficient to show
\eqref{single-2} for $(p,q)=(\infty,2).$ By Lemma \ref{local t} this
follows from
\begin{equation}\label{single11}\|\chi_I(t) \mathcal T_R^\nu h\rnmix{2}{\infty}\le
C \lambda_0^{-\frac12}(1+\nu)^\frac14 R^{-\frac{2n-3}{4}}\|h\|_2.
\end{equation}
Here $I$ is an interval of length $\sim R$. For the case $R \ll \nu$
it is easy to check \eqref{single11} as before and the case $R \gg
\nu^2$ is already handled (see \eqref{nunu} in Remark \ref{remark}). Hence to show
\eqref{single11}  we may assume
\[\nu\ll R\lesssim \nu^2.\]

To treat this case we use \eqref{bessel2nu1}. The contribution from
$h_\nu$ in \eqref{bessel2nu1} is $O(R^{-\frac{n-1}2}\|h\|_2)$. So,
it is acceptable.  Hence it is enough to show that
\begin{equation*}
\|\mathcal T_{\pm} h\rnmix{2}{\infty}\le C
\lambda_0^{-\frac12}\nu^{\frac14} R^{\frac{1}{4}}\|h\|_2,
\end{equation*}
where
\[ \mathcal T_{\pm} g(t,r)= \chi_I(t)\widetilde \chi_R(r)\int e^{-it\widetilde{\omega}(\rho)\pm i\theta(r\r)} \beta_\nu(\rho,r)g(\rho) d\rho\]
and $\beta_\nu(\rho,r)=\beta(\rho)\big(1-\frac{\nu^2}{\r^2
r^2}\big)^{-\frac14}$.
 We only show the estimate for
$ \mathcal T_+$. The other can be handled similarly.
  Following the previous argument
we need to show that
\[\|\mathcal T_+\mathcal T_+^* H\rnmix{2}{\infty}\le C\lambda_0^{-1}(1+\nu)^\frac12
R^{\frac{1}{2}}\|h\rnmix{2}{1}.\] Since
\[\mathcal T_+\mathcal T_+^* H=\iint \chi_I(t) \chi_I(s)K(t-s,r,r') [ r'^{n-1} H(s,r')] dr'ds,\]
and
\[K(t,r,r')=\int e^{-it\widetilde{\omega}(\rho)+ i(\theta(r\r)-\theta(r'\r))} \beta_\nu(\rho,r)\beta_\nu(\rho,r') d\rho.\]

Now let us observe that  for $\nu \ll r,$  $\rho \sim 1$
\[\Big|\frac{d^2}{d\rho^2} \theta(r\rho) \Big|\lesssim \frac{\nu^2}r \lesssim \nu.\]
So, if $|t|\ge C\lambda_0^{-1}\nu$ for some large $C$,
\[\Big|\frac{d^2}{d\rho^2}\Big(-t\widetilde{\omega}(\rho)+ \theta(r\r)-\theta(r'\r)\Big)\Big|\ge C\lambda_0|t|.\]
Hence, from van der Corput lemma we get $ |K(t,r,r')|\le
C\lambda_0^{-\frac12}|t|^{-\frac12}$ if $|t|\ge C\lambda_0^{-1}\nu$.
Hence using trivial bounds $ |K(t,r,r')|=O(1)$ for $|t|\le
C\lambda_0^{-1}\nu$ we see that
\[\int  \chi_I(t) \chi_I(s)\sup_{r,r'} |K(s-t,r,r')| dt, \int  \chi_I(t) \chi_I(s)\sup_{r,r'} |K(s-t,r,r')| ds \]
are bounded by
\[ C\int_0^{\lambda_0^{-1}\nu}  dt+ C\lambda_0^{-\frac12}\int_0^R t^{-\frac12} dt\le C\lambda_0^{-1}\nu+ C\lambda_0^{-\frac12}R^\frac12\le C \lambda_0^{-1} \nu^{\frac12}R^\frac12\]
because $\nu \ll R$. Then by Schur's test we get the desired bound.
\end{proof}

\begin{rk}[The wave equation]\label{wavecase}
For the wave equation $\omega(\rho)=\pm\rho$, the estimates are much easier to show. Let us consider the operator
\[
\mathcal W_R^\nu h(t, r) = \chi_R(r)r^{-\frac{n-2}2}\int
e^{-it\varpi(\rho)} J_\nu(r\rho)\beta(\rho)h(\rho) d\rho.
\]
Then we have for $2\le p,q\le \infty$
\begin{equation}\label{wave}
\|\mathcal W_R^\nu h\rn \le CR^{\frac1q+\frac{n-1}{p}-\frac{n-1}2} \|h\|_2.
\end{equation}
We only need to show the estimates for $(p,q)=(2,\infty),$ $(2,2)$,
$(\infty,\infty)$, $(2,\infty)$. In fact, the case
$(p,q)=(2,\infty)$ is a consequence of Plancherel's theorem. So, we
can apply H\"older's inequality and Lemma \ref{local t} to the
estimate \eqref{wave}  with  $(p,q)=(2,\infty)$ to get \eqref{wave}
for $(p,q)=(2,2)$.  When $(p,q)=(\infty,\infty)$, the desired
estimate can be obtained by Schwarz's inequality and
\eqref{L2-bessel} (cf. \eqref{lrinfty}). So similarly the case
$(p,q)=(\infty,2)$  also follows by H\"older's inequality and Lemma
\ref{local t}.
\end{rk}

\section{Proofs of Theorem \ref{opt}, \ref{opt2}}

In this section we prove Theorem \ref{opt} and \ref{opt2} by making
 use of the estimates in the previous section. The estimates other
than those on the sharp line
$(\frac{1}{q}=\frac{2n-1}{2}(\frac12-\frac1p))$ are relatively easy
to show once we have  Proposition \ref{linear}. However, to get the
endpoint estimates on the sharp line we show improved estimates
(Lemma \ref{interaction}) when the difference of spatial scales is
large and combine them with bilinear interpolation argument which
was used by Keel and Tao \cite{kt} to show the endpoint Strichartz
estimate.

We start with proving the necessity of the condition  $\frac1q \le
\frac{2n-1}{2}(\frac12 - \frac1p)$ for \eqref{homo}.

\subsection{The  necessary condition \eqref{nec}}\label{neccon}   Let $\varphi$ be a radially symmetric function  such that $\widehat \varphi$ is supported in $\{|\xi|\sim 1\}$.   Since $\widehat \varphi$ is also radial, we can write the solution $u$ to \eqref{linear eqn}  as
 \[u(t, x)= C|x|^{-\frac{n-2}2}\int e^{-it\omega(\rho)} \rho^\frac n2 J_{\frac{n-2}2}(|x|\rho)\widehat \varphi(\rho)\,d\rho.\]
Fix  $\rho_0\in [1,2]$. For $R\gg 1$, let us choose $\varphi$ such
that
\[\widehat \varphi(\xi)= \rho^{-\frac {n-1}2}\phi(R^\frac12(\rho-\rho_0)), \ \rho=|\xi|,\]
where $\phi\in C_c^\infty(-1,1)$. By the asymptotic of Bessel
function \eqref{upper} we have
\begin{align} \label{assmp}
u(t, x)&= C|x|^{-\frac{n-1}2}\int e^{i(-t\omega(\r) + |x|\rho)}
\phi(R^\frac12(\rho-\rho_0)) d\rho
\\
& \  \  \  +C|x|^{-\frac{n-1}2}\int e^{i(-t\omega(\r) - |x|\rho)}
\phi(R^\frac12(\rho-\rho_0))d\rho+
O(R^{-\frac12}|x|^{-\frac{n+1}{2}})\nonumber
\end{align}
provided $|x|\sim |t| \gg 1$.  Now observe that if $ |t|\le R$ and
$||x|-\omega(\rho_0) t|\lesssim  R^\frac12$,
\begin{align*}
-t\omega(\r) + |x|\rho
&=|x|\rho_0-t\omega(\r_0)+(|x|-\omega(\rho_0) t)(\rho-\rho_0)+O(t(\rho-\rho_0)^2)\\
&=|x|\rho_0-t\omega(\r_0)+O(1)
\end{align*}
since $|\rho-\rho_0|\le R^{-\frac12}$.  By changing the variables
$\rho\to \rho+\rho_0$ the second integral  equals
\[C|x|^{-\frac{n-1}2}e^{-i|x|\rho_0}  \int e^{i(-t\omega(R^{-\frac12}\r+\r_0) - R^{-\frac12}|x|\rho)}  \phi(\r)d\rho.\]
Since $|\frac{d}{d\r}(-t\omega(R^{-\frac12}\r+\r_0) -
R^{-\frac12}|x|\rho)|\ge CR^\frac12$  if $ t, |x|\sim R$,
 by integration by parts  we see that the second integral in \eqref{assmp} is $O(R^{-M})$ for any $M$ if $t, |x|\sim
 R$.
Hence, for $ t, |x|\sim R$ and $||x|-\omega(\rho_0)
t|\lesssim  R^\frac12$
\[|u(t, x)|\gtrsim R^{-\frac n2}.\]
Therefore it follows that
\[\| u\srmix{q}{p}\gtrsim R^{-\frac n2 +\frac1q +\frac{2n-1}{2p}}.\]
On the other hand $\|\varphi\|_{L_x^2}\sim R^{-\frac14}$.  Since
$\varphi$ is a radial function, $\|\varphi\|_{L_x^2} =
\|\varphi\hmix 2$. So the estimate \eqref{homo} implies that
$R^{-\frac n2 +\frac1q +\frac{2n-1}{2p}}\lesssim
 R^{-\frac14}$. Letting $R\to \infty$, we get the condition $\eqref{nec}. $

\subsection{Frequency localization}  By Littlewood-Paley theory,  scaling and orthogonality  the estimate \eqref{homo} can be obtained from the estimates for the simpler
operator $\mathcal T^\nu  $ which is defined by
\[\mathcal T^\nu  h(t,r) = r^{-\frac{n-2}2}\int e^{-it\varpi(\rho)} J_\nu(r\rho)\rho^\frac n2 \beta(\rho) h(\rho) d\rho.\]

\begin{lem}\label{flocal} Let $2 \le p<\infty$, $2\le q \le \infty$, $\gamma\ge 0$, and $\varpi \in C^4(1/2,2)$ which satisfies \eqref{cond ome 1st}. Suppose that for
$\nu = \nu(k) = \frac{n-2+2k}{2}$, $k\ge 0$,
\begin{equation}\label{reduced}
\|\mathcal T^\nu h\rn\le
C(1+\nu)^\gamma\lambda_0^{\frac1{2p}-\frac14}\|h\|_{2}.
\end{equation}
Then the solution $u$ to \eqref{linear eqn} satisfies \eqref{homo}
with $s_1 $, $s_2$ and $s $ satisfying \eqref{scon} provided that
$\alpha = \gamma+(n-1)(\frac12-\frac1p)$.
\end{lem}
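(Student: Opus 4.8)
The plan is to pass from the full solution to a single Littlewood--Paley block and then, inside that block, to a spherical harmonic expansion that converts matters to the one dimensional operators $\mathcal T^\nu$. Since $C_c^\infty$ is dense in $\dot H^s_rH_\sigma^\alpha$ it suffices to prove \eqref{homo} for $\varphi\in C_c^\infty$. Write $\varphi=\sum_j\varphi_j$ with $\widehat{\varphi_j}$ supported in $\{|\xi|\sim 2^j\}$, and let $u_j$ be the solution of \eqref{linear eqn} with data $\varphi_j$. Since $2\le p,q$, the Littlewood--Paley inequality in $x$ together with Minkowski's inequality gives $\|u\|_{\mix{q}{p}}\lesssim(\sum_j\|u_j\|_{\mix{q}{p}}^2)^{1/2}$, while by Plancherel the right side of \eqref{homo} is $\sim(\sum_j\|\mathcal D_\omega^{s_1,s_2}\varphi_j\|_{\dot H^s_rH_\sigma^\alpha}^2)^{1/2}$. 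Hence it is enough to bound $\|u_j\|_{\mix{q}{p}}$ for each $j$.

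Fix $j$ and rescale $\xi\mapsto 2^j\xi$. Putting $\varpi(\rho)=\omega(2^j\rho)/(2^j\omega'(2^j))$ and $\lambda_0=\lambda_0(j):=c\,2^j|\omega''(2^j)|/|\omega'(2^j)|$ for a small fixed $c>0$, the conditions \eqref{con1}, \eqref{con2} for $k=1,2$ and \eqref{con3} for $1\le k\le 3$ show that $\varpi\in C^4(1/2,2)$ satisfies \eqref{cond ome 1st} with this $\lambda_0$, uniformly in $j$ (in particular $\lambda_0\lesssim1$ by \eqref{con3} with $k=1$, and $|\varpi''|\sim 2^j|\omega''(2^j)|/|\omega'(2^j)|$ on $(1/2,2)$ by \eqref{con2} with $k=2$). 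After the accompanying dilations in $x$ and $t$, $u_j$ becomes, up to explicit Jacobian factors, the solution with phase $\varpi$ and frequencies $\sim1$; expanding its data in spherical harmonics, $\varphi_j(x)\sim\sum_{k,\ell}a_{k,\ell}(|x|)Y_{k,\ell}(x/|x|)$, Bochner's relation represents this solution as $\sum_{k,\ell}(\mathcal T^{\nu(k)}h_{k,\ell})(t,|x|)\,Y_{k,\ell}(x/|x|)$ with $\nu(k)=\tfrac{n-2+2k}2$ and $h_{k,\ell}$ the (frequency localized) Hankel transform of $a_{k,\ell}$, so that, modulo the dilation, $\sum_{k,\ell}\|h_{k,\ell}\|_2^2\sim\|\varphi_j\|_{L^2}^2$ and, more generally, $\sum_{k,\ell}(1+\nu(k))^{2\alpha}\|h_{k,\ell}\|_2^2\sim\|D_\sigma^\alpha\varphi_j\|_{L^2}^2$ since $1+k(k+n-2)\sim(1+\nu(k))^2$.

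To estimate $\|u_j\|_{\mix{q}{p}}$ I would freeze $(t,r)$, regard $\sum_{k,\ell}(\mathcal T^{\nu(k)}h_{k,\ell})(t,r)Y_{k,\ell}$ as a function on $S^{n-1}$, and apply the critical Sobolev embedding $H^{(n-1)(\frac12-\frac1p)}(S^{n-1})\hookrightarrow L^p(S^{n-1})$, which holds for $2\le p<\infty$. This bounds its $L^p(S^{n-1})$ norm by $\big(\sum_{k,\ell}(1+\nu(k))^{2(n-1)(\frac12-\frac1p)}|(\mathcal T^{\nu(k)}h_{k,\ell})(t,r)|^2\big)^{1/2}$. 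Taking the $\mathfrak L^p_r$ norm in $r$ and then the $L^q_t$ norm, and using Minkowski's inequality twice (legitimate since $2\le p,q$) to pull the $\ell^2$-sum outside, I then invoke the hypothesis \eqref{reduced} for each $\mathcal T^{\nu(k)}h_{k,\ell}$ (with the $\varpi,\lambda_0(j)$ just constructed). The factors $(1+\nu(k))^{(n-1)(\frac12-\frac1p)}$ from the Sobolev step and $(1+\nu(k))^\gamma$ from \eqref{reduced} combine into $(1+\nu(k))^\alpha$ with $\alpha=\gamma+(n-1)(\tfrac12-\tfrac1p)$, and the surviving sum is $\sim\lambda_0(j)^{\frac1{2p}-\frac14}\|D_\sigma^\alpha\varphi_j\|_{L^2}$, still in the rescaled variables.

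Finally I would undo the rescaling. A direct computation shows that the $x$- and $t$-Jacobian factors, together with $\lambda_0(j)^{\frac1{2p}-\frac14}$ and the relations \eqref{scon} among $s_1,s_2,s$ (the algebraically relevant ones being $s_2=\tfrac1{2p}-\tfrac14$ and $s_1+s_2=-\tfrac1q$), reassemble into $\big(\omega'(2^j)/2^j\big)^{s_1}|\omega''(2^j)|^{s_2}2^{js}$, which on the $j$-th block is comparable to the symbol of $\mathcal D_\omega^{s_1,s_2}|\nabla|^s$; hence $\|u_j\|_{\mix{q}{p}}\lesssim\|\mathcal D_\omega^{s_1,s_2}\varphi_j\|_{\dot H^s_rH_\sigma^\alpha}$, and summing in $j$ as above gives \eqref{homo}. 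The two points needing care are the uniform verification that $\varpi$ obeys \eqref{cond ome 1st} — this is precisely where \eqref{con1}--\eqref{con3} are used — and the bookkeeping of the powers of $2^j$, $\omega'(2^j)$, $\omega''(2^j)$ under the dilation; everything else is the standard reduction to one dimension via spherical harmonics, with the sharp Sobolev embedding on $S^{n-1}$ accounting for the angular loss $(n-1)(\tfrac12-\tfrac1p)$.
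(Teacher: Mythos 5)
Your argument is correct and follows essentially the same route as the paper: Littlewood--Paley decomposition with orthogonality in $j$, rescaling of each block to frequency $\sim 1$ with $\varpi$, $\lambda_0$ built from $\omega$ (verified via \eqref{con1}--\eqref{con3}), the Bochner/Hecke reduction to the operators $\mathcal T^{\nu(k)}$, the critical Sobolev embedding $H^{(n-1)(\frac12-\frac1p)}(S^{n-1})\hookrightarrow L^p(S^{n-1})$ giving the angular loss, Minkowski's inequality for $p,q\ge 2$, and the scaling bookkeeping identifying $\mathcal D_\omega^{s_1,s_2}|\nabla|^s$ via \eqref{con2}. The only difference is cosmetic: you apply the sphere Sobolev embedding after the spherical harmonic expansion, while the paper applies it (equivalently) before rescaling, passing from $L^p_x$ to $\mathfrak L^p_r L^2_\sigma$ with $D_\sigma^{\alpha-\gamma}$.
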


\begin{proof}
Let $N>0$ denote dyadic numbers and let $\beta\in C_c^\infty(1/2,2)$
be such that $\sum_{N} \beta(|\xi|/N)=1$, $|\xi| \neq 0$. Then we
define  $P_N$ to be the projection operator given by
\[\widehat {P_N f} (\xi)=\beta\big(\frac{|\xi|}N\big)\widehat f(\xi).\]
Since $2\le p<\infty$ and $q\ge 2$, by Littlewood-Paley theory,
Minkowski's inequality and Sobolev embedding on the unit sphere
$S^{n-1}$  it follows that
\begin{align*}
\| \su \varphi \nmix{q}{p}
&\sim \Big\|\Big\|(\sum_N |P_N \su \varphi|^2)^\frac12 \Big\|_{L_x^p}\Big\|_{L_t^q} \\
&\le  \Big(\sum_N\| \su P_N\varphi \nmix{q}{p}^2\Big)^\frac12\\
&\le  \Big(\sum_{N}\| \su P_ND_\s^{(\alpha-\gamma)}
\varphi\srmix{q}{p}^2\Big)^\frac12.
\end{align*}
Note that $\alpha-\gamma=(n-1)(\frac12-\frac1p)$. Then, by
orthogonality it is sufficient for  \eqref{homo} to show that
\[
\| \su P_N\varphi\srmix{q}{p}\le C\|\mathcal D_\omega^{s_1,s_2}
P_N\varphi\|_{\dot{H}_r^sH_\s^{\gamma}}
\]
with $C,$ independent of $N$. By the property $(ii)$ of $\omega$ it reduces to
\[
\| \su P_N\varphi\srmix{q}{p}\le W_NN^{n(\frac 12 - \frac 1p)-
\frac2q }\|\varphi\|_{{\mathfrak L^2_r} H_\s^{\gamma}},
\]
where $W_N=C(\omega'(N){N}^{-1})^{(\frac14 - \frac1{2p})-\frac1q}
|\omega''(N)|^{\frac1{2p}-\frac14}.$ By rescaling $ \xi\to N\xi,$ $
x\to N^{-1} x,$ $t\to (N\omega'(N))^{-1} t, $ and \eqref{scon} this
is equivalent with
\begin{equation}\label{fflocal}
\| e^{-it \varpi(|\nabla|)} P_1\varphi\srmix{q}{p}\le
C\lambda_0^{\frac1{2p}-\frac14}\|\varphi\|_{{\mathfrak L^2_r}H_\s^\al},
\end{equation}
where
\begin{equation}\label{www} \varpi(\rho)=\frac{\omega(N\rho)}{N\omega'(N)}, \ \ \ \lambda_0=\Big|\frac{N\omega''(N)}{\omega'(N)}\Big|.\end{equation}

Since $\|\varphi\|_{{\mathfrak L^2_r}H_\s^\al}=\|\widehat\varphi\|_{{\mathfrak L^2_r}H_\s^\al}$ by
Plancherel's theorem and orthogonality of spherical harmonics, we
are reduced to showing that
\begin{align*}
\| T f\srmix{q}{p}\le
C\lambda_0^{\frac1{2p}-\frac14}\|f\|_{{\mathfrak L^2_r}H_\s^{\gamma}},
\end{align*}
for $f$ supported in $\{\frac12 \le |\xi| \le 2\}$, where
\[
 T f(t, x) = \int e^{i(x\cdot \xi - t \varpi(|\xi|))}\beta(|\xi|) f(\xi)\,d\xi.
\]

 We now expand $f$  by the orthonormal basis $\{Y_k^l\}, k \ge 0, 1 \le l \le d(k)$ of spherical harmonics
 (here $d(k)$ is the dimension
of spherical harmonics of order $k$) such that
 $$f(\xi) = f(\rho \s) = \sum_{k \ge 0}\sum_{ 1 \le l \le d(k)}a_k^l(\rho) Y_k^l(\s).$$
 We use the identity   $\widehat{Y_k^l}(\rho\s) =
c_{n,k}\rho^{-\frac{n-2}2}J_\nu(\rho)
 Y_k^l(\s)\footnote{$c_{n,k}=(2\pi)^\frac n2 i^{-k}$},$ $ \nu =
\nu(k) = \frac{n-2+2k}{2}$ (see \cite{StWe}) to get
\begin{align}
T f(t, x) = \sum_{k,l}c_{n,k}\mathcal T^\nu (a_k^l)(t, r)\;
Y_k^l(x/|x|),\quad r = |x|,\label{express}
\end{align}
where $|c_{n,k}| =(2\pi)^\frac n2 $, $k \ge 0$ for some positive
constant $C$ which is not depending on $k$. By orthogonality among
$\{Y_k^l\}$ and Minkowski's inquality
\[\| T f\srmix{q}{p}\le  C\Big( \sum_{k,l}\|\mathcal T^{\nu(k)} (a_k^l)\srmix{q}{p}^2\Big)^\frac12. \]
Since $\omega$ satifies the conditions $(i)-(iii)$, it is easy to
check that $\varpi$, $\lambda_0$ in \eqref{www} verifies the condition \eqref{cond
ome 1st}. Hence by the estimate \eqref{reduced} and the identity $
\|f\|_{L^2_rH_\s^\al} = \Big\|\Big(\sum_{k,l}(1 +
k(k+n-2))^\al|a_k^l|^2\Big)^\frac12\Big\|_{ L_{\rho\sim 1}^{2}} $
which follows from the fact that $-\De_\s Y_k^l = k(k+n-2)Y_k^l$, we
get
\[\| T f\srmix{q}{p}\le  C\lambda_0^{\frac1{2p}-\frac14}\Big( \sum_{k,l}(1+\nu(k))^{2\gamma}\|a_k^l\|_{ L_{\rho\sim 1}^{2}}^2\Big)^\frac12\le C\lambda_0^{\frac1{2p}-\frac14}\|f\|_{L^2_rH_\s^\gamma} . \]
This completes the proof.\end{proof}

\subsection{Proof of Theorem \ref{opt}} \label{thmproof} From the result in \cite{choozxia},
we already have estimates \eqref{homo} for $\frac
n2(\frac12-\frac1r)=\frac1q$ with $\alpha=0$. So, by interpolation
it is enough to consider estimates  near or on the sharp line
($\frac1q =\frac{2n-1}{2}(\frac12 - \frac1p)).$ Hence by Lemma
\ref{flocal} we only need to show \eqref{reduced} with
$\gamma=\frac45(\frac12-\frac1p)+\epsilon$ for any  $\epsilon>0$ if
$\frac n2(\frac12-\frac1r)<\frac1q \le  \frac{2n-1}{2}(\frac12-
\frac1p) $ and $q\neq 2$. In fact, note  that
$(n-1+\frac45)(\frac12-\frac1p)\to \frac{5n-1}{5(2n-1)}  $ as
$(p,q)\to (\frac{4n-2}{2n-3}, 2)$. So, we interpolate  \eqref{homo}
with $(p,q)$ arbitrarily   close to $(\frac{4n-2}{2n-3}, 2)$ and
\eqref{homoo} to get the desired estimate.

The rest of this subsection is devoted to the proof of
\eqref{reduced}.

\subsubsection{Estimates away from the sharp line}  We firstly show \eqref{reduced} when $\frac n2(\frac12-\frac1r)<\frac1q < \frac{2n-1}{2}(\frac12- \frac1p)$.
 We break the operator in spatial space radially. Fix a
dyadic number $ R_0\ge 1$. We write
 \[ \mathcal T^\nu h=\chi_{\{r<
R_0\}}\mathcal T^\nu h+ \chi_{\{r\ge  R_0\}}\mathcal T^\nu h .\] The
first is easy to handle. In fact, we show that  for $2\le p,q\le \infty$,
\begin{equation}\label{lolo} \| \chi_{\{r<
R_0\}}\mathcal T^\nu h\rn \le C\|h\|_2.
\end{equation}
From \eqref{express}  we note that
\[ c_{n,k}\mathcal T^\nu h(t,|x|)\, Y_k^l(\frac{x}{|x|})=\int e^{i(x\cdot \xi - t \varpi(|\xi|))}  h(|\xi|)Y_k^l(\frac{\xi}{|\xi|})\beta(|\xi|) \,d\xi.\]
Then the estimate \eqref{lolo} for $(p,q)=(2,\infty)$ follows from
Plancherel's theorem. Also, by taking $L^2$ norm in angular
variables (on $S^{n-1}$) and Schwarz's inequality  we  get
$|\mathcal T^\nu h(t,r)|\le C\|h\|_2.$ Interpolation establishes
\eqref{lolo} for $2\le p\le \infty,$ $q=\infty$. Now,   by Lemma
\ref{local t} it is sufficient for \eqref{lolo} to show $\|\chi_{[0,
2R_0]}(t)\mathcal T^\nu h(t,r)\rn \le C\|h\|_2$ for $2\le p,q\le
\infty$.  It follows by H\"older's inequality. Hence we get the
desired estimate \eqref{lolo}.

Recalling \eqref{th-dfn}, we further break $\chi_{\{r\ge
R_0\}}\mathcal T^\nu h$ to get
\[\chi_{\{r\ge R_0\}}\mathcal T^\nu  h=\sum_{R:\, dyadic, \ R \ge R_0 }\,\mathcal T_R^\nu h. \]
After triangle inequality we apply Proposition \ref{linear}
(estimate \eqref{single}) and sum  the resulting estimates  to get
\begin{align*}
\|\chi_{\{r\ge R_0\}}\mathcal T^\nu  h\rn \le C
\lambda_0^{-\frac12(\frac{1}{2}-\frac1p)}(1+\nu)^{\frac45(\frac12 -
\frac1p)} \|h\|_2
\end{align*}
provided that $2\le p,q\le \infty$, $2/q\ge 1/2-1/p$, and $\frac1q <
\frac{2n-1}{2}(\frac12- \frac1p)$.  From this and \eqref{lolo}  we
get the estimate \eqref{reduced}. This proves the non-endpoint
result due to Guo and Wang \cite{guwa}.

\subsubsection{Estimate along the sharp line $\frac1q+\frac{2n-1}{2p}-\frac{2n-1}{4}= 0$}
\newcommand{\schrd}{e^{i(t-s)\omega(|\nabla|)}}
We show the estimate \eqref{reduced}  for $\frac1q =
\frac{2n-1}{2}(\frac12- \frac1p)$, $q\neq 2$.  It will be basically
done by making use of $TT^*$ argument but the argument here is more
involved . Since the estimate for $(p,q)=(2,\infty)$ is trivial, we
may assume $p\neq 2$.

By \eqref{lolo} it is sufficient to consider $\chi_{\{r\ge
R_0\}}\mathcal T^\nu  h$. We further break it  (up to the cases in
\eqref{cases}) to get
\[\chi_{\{r\ge R_0\}}\mathcal T^\nu  h=\Big (\sum_{R_0\le R < 5\nu^\frac85}+\sum_{R\ge 5 \nu^\frac85} \Big)\,\mathcal T_R^\nu h. \]
The first sum is easy to handle. From Proposition \ref{linear}   we
have \eqref{single}.
 So, by direct summation we see that for $\frac1q+\frac{2n-1}{2p}-\frac{2n-1}{4}= 0$
\[ \|\sum_{R_0\le R< 5 \nu^\frac85} \mathcal T_R^\nu h\rn\le C(\log\nu)\nu^{\frac45(\frac12-\frac1p)}\|h\|_2.\]

To obtain the desired estimate  for $\sum_{R\ge  5 \nu^\frac85}
\mathcal T_R^\nu h$, it is sufficient to show that
\[ \|\chi_{\{r\ge 5\nu^\frac85\}} \m T_{\Omega} h\rn \le C(1+\nu)^{\frac45(\frac12 -
\frac1p)}\lambda_0^{\frac1{2p}-\frac14}\|h\|_2 \] with
$\Omega=J_\nu$. (See \eqref{tom}.)  For the proof of
\eqref{reduced}, using  \eqref{bessel4} in Lemma \ref{asymp bessel}
we need only  to show this with  \[\Omega(\rho)= \r^{-1/2} e^{\pm
i\r}, \, \nu^2\r^{-\frac32} e^{\pm i\r}, \, \Psi_\nu(\rho)=
O(\r^{-1}).\]

First we handle the case $\Omega=\Psi_\nu$. We break the operator
dyadically so that
\[\|\sum_{R\ge 5\nu^\frac85}\chi_R(r) \m T_{\Psi_\nu} h\rn\le \sum_{R\ge 5\nu^\frac85}\|\chi_R(r) \m T_{\Psi_\nu} h\rn.\]
Since $\Psi_\nu(\rho)= O(\r^{-1})$,  $\|\m T_{\Psi_\nu}
h\rnmix\infty\infty \le C R^{-\frac n2}$. From Lemma \ref{hh} and
H\"older's inequality,  it follows that  $\|\chi_R(r) \m
T_{\Psi_\nu} h\rnmix qp\le CR^{-\frac{n}{2}+\frac1q+\frac
np}\|h\|_2$ for $p,q\ge 2$. Since $\frac1q = \frac{2n-1}{2}(\frac12-
\frac1p)$, $p\neq 2$, we get for some $\epsilon>0$
\[\|\sum_{R\ge 5\nu^\frac85}\chi_R(r) \m T_{\Psi_\nu} h\rn\le C\sum_{R\ge 5\nu^\frac85}R^{-\epsilon}\|h\|_2 \le C\|h\|_2.\]
 When $\Omega(\rho)=\nu^2\r^{-\frac32} e^{\pm i\r}$, using \eqref{nunu0} we obtain the desired bound by direct summation.  Indeed, if  $\Omega(\rho)=\nu^2\r^{-\frac32} e^{\pm i\r}$, by \eqref{nunu0}
\begin{align*}
\|\sum_{R\ge 5\nu^\frac85}\chi_R(r) \m T_{\Omega} h\rn
&\le
C \lambda_0^{-\frac12(\frac12-\frac1p)} \sum_{R\ge 5\nu^\frac85} (\nu^2
R^{-1})^{1-\frac2p}R^{\frac1q-\frac{2n-1}2(\frac12-\frac1p)}\|h\|_2\\
&\le C(1+\nu)^{\frac45(\frac12 -
\frac1p)}\lambda_0^{\frac1{2p}-\frac14}\|h\|_2.
\end{align*}

We  now handle the case $\Omega(\rho)= \r^{-1/2} e^{\pm i\r}$ which
is the main term.   By discarding some irrelevant factors it is
sufficient to show that  for $\frac1q = \frac{2n-1}{2}(\frac12-
\frac1p)$, $2<q<\infty$,
\[ \label{eq1}\|\sum_{R\ge 5\nu^\frac85} S_Rh\rn\le C%\nu^{\frac12-\frac1p}
\lambda_0^{\frac1{2p}-\frac14} \|h\|_2 ,\]
 where
\[
S_Rh(t,r)=R^{-\frac {n-1}2} \w\chi_R(r)\int e^{i(-t\varpi(\rho)\pm\,
r\r)} \beta(\rho)h(\rho) d\rho
\]
and $\varpi$ satisfies \eqref{cond ome 1st}.
By duality it is equivalent with
\begin{equation}\label{l22}
\|\sum_{R\ge 5\nu^\frac85} S_R^{\,\,*} H\|_2\le C
\lambda_0^{\frac1{2p}-\frac14} \|H\pprn
\end{equation}
for $\frac1q = \frac{2n-1}{2}(\frac12- \frac1p)$, $2<q<\infty$. Here
$ S_R^{\,\,*}$ is the adjoint of $ S_R$. Now the proof of Theorem
\ref{opt} completes if we show \eqref{l22}.

\subsubsection{An improved estimate for $S_{R}S_{R'}^{\,\,*} H$} From \eqref{nunu1} in Remark \ref{remark}  we have
\[\|S_R h\rnmix{q}{p}\le
C\lambda_0^{\frac1{2p}-\frac14} R^{\frac1q+\frac{2n-1}{2}(\frac1p-\frac12)}\|h\|_2\]
provided that $2\le p,q\le \infty$, $\nu \ge 0$, and $2/q\ge
1/2-1/p$. By duality  we have for $2\le
p,q\le \infty$ and $2/q\ge 1/2-1/p$
\begin{equation}
\label{dual} \|S_{R}^{\,\,*} H\|_2 \lesssim
\lambda_0^{-\frac12(\frac12-\frac1p)}R^{\frac1q-\frac{2n-1}2(\frac12-\frac1p)}
\|H\pprn.
\end{equation}
Hence it follows that
\begin{equation}
\label{inpest} \| S_{R}S_{R'}^{\,\,*} H\rn \lesssim
C\lambda_0^{(\frac1{2p}-\frac1{2\widetilde p'})}
R^{\frac1q-\frac{2n-1}2(\frac12-\frac1p)}(R')^{\frac1{\widetilde
q}-\frac{2n-1}2(\frac12-\frac1{\widetilde p})} \|H\prn
\end{equation}
provided that $2\le p,q\le \infty$ and $2/q\ge 1/2-1/p$ and $2\le
\widetilde p,\widetilde q\le \infty$ and $2/\widetilde q\ge
1/2-1/\widetilde p$. However to get the estimates at the critical
line \eqref{inpest} is still not enough. To get over it, we make an
observation which is stated in the following lemma.

\begin{lem}\label{interaction} Let us  denote $\max(R, R')$ by $R^*$ and $\min(R,R')$ by $R_*$.
If\, $2\le q, \widetilde q\le \infty$,
\begin{align}
\|S_R S_{R'}^{\,\,*} H\nqp{q}{\infty} &\le C\lambda_0^{-\frac12}
R^{\frac1q-\frac{2n-1}4} (R')^{\frac1{\widetilde q}-\frac{2n-1}4}
\Big(\frac{R_*}{R^*}\Big)^{\min(\frac{1}{4}-\frac1q,
\frac14-\frac1{\widetilde q})}\|H\nqp{\wtq}{1}\label{est1}.
\end{align}
\end{lem}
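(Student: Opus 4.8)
The plan is to work with the integral kernel of $S_R S_{R'}^{\,\,*}$ and to exploit that, once the two spatial scales are well separated, the variables $r$ and $r'$ are forced apart by a distance $\sim R^*$. Since the two $\pm$ signs inside $S_R$ and $S_{R'}$ are a single fixed choice, we have
\[
S_R S_{R'}^{\,\,*} H(t,r)=\iint \mathcal K(t-s,r,r')\,(r')^{n-1}H(s,r')\,dr'\,ds,
\]
with $\mathcal K(\tau,r,r')=(RR')^{-\frac{n-1}2}\,\widetilde\chi_R(r)\widetilde\chi_{R'}(r')\,G_\tau(r-r')$ and $G_\tau(y)=\int e^{i(-\tau\varpi(\rho)+y\rho)}\widetilde\beta(\rho)\,d\rho$, $\widetilde\beta\in C_c^\infty(1/2,2)$. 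When $R^*\le 4R_*$ the factor $(R_*/R^*)^{\min(\frac14-\frac1q,\,\frac14-\frac1{\widetilde q})}$ is comparable to $1$, and \eqref{est1} follows from \eqref{inpest} with $p=\infty$, $\widetilde p'=1$ (equivalently, by composing the bounds for $S_R$ and $S_{R'}^{\,\,*}$, after a temporal localization if necessary). So I may assume $R^*\ge 4R_*$; then $\widetilde\chi_R(r)\widetilde\chi_{R'}(r')\ne0$ forces $|r-r'|\sim R^*$.

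The core of the matter is a pair of bounds for the one–dimensional oscillatory integral $G_\tau(y)$ in the range $|y|\sim R^*$. Since $|\varpi''|\ge\lambda_0$ on $\supp\widetilde\beta$, van der Corput's lemma gives $|G_\tau(y)|\lesssim\lambda_0^{-1/2}|\tau|^{-1/2}$. On the other hand, with $\phi(\rho)=-\tau\varpi(\rho)+y\rho$ one has $|\phi'(\rho)|\gtrsim\max(|\tau|,|y|)$ unless $|y|\in[c_1|\tau|,c_2|\tau|]$ for constants $c_1,c_2\sim1$ fixed by the range of $\varpi'$ on $\supp\widetilde\beta$, while $|\phi^{(k)}(\rho)|\lesssim|\tau|$ for $2\le k\le4$; since $\varpi\in C^4$ one may integrate by parts three times off the cone $|\tau|\sim|y|$ to obtain $|G_\tau(y)|\lesssim(|\tau|+|y|)^{-3}$. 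Combining the two, $g(\tau):=\sup_{|y|\sim R^*}|G_\tau(y)|$ satisfies
\[
g(\tau)\lesssim\lambda_0^{-1/2}(R^*)^{-1/2}\mathbf 1_{\{|\tau|\sim R^*\}}+(R^*)^{-3}\mathbf 1_{\{|\tau|\lesssim R^*\}}+|\tau|^{-3}\mathbf 1_{\{|\tau|\gtrsim R^*\}},
\]
hence $\|g\|_{L^a_\tau}\lesssim\lambda_0^{-1/2}(R^*)^{1/a-1/2}$ for every $1\le a\le\infty$; note in particular that $g\in L^1_\tau$, so no temporal localization is needed in this regime.

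To conclude, I integrate out $r'$: for fixed $t,r$ the bound $\int|G_{t-s}(r-r')|\,\widetilde\chi_{R'}(r')\,|H(s,r')|\,(r')^{n-1}dr'\le g(t-s)\,\|H(s,\cdot)\|_{\mathfrak L^1_r}$ gives $\|S_R S_{R'}^{\,\,*}H(t,\cdot)\|_{\mathfrak L^\infty_r}\le (RR')^{-\frac{n-1}2}\bigl(g\ast_t\|H\|_{\mathfrak L^1_r}\bigr)(t)$. Young's inequality with $\frac1a=\frac1q+\frac1{\widetilde q}\in(0,1]$ then yields
\[
\|S_R S_{R'}^{\,\,*}H\nqp{q}{\infty}\lesssim\lambda_0^{-1/2}(RR')^{-\frac{n-1}2}(R^*)^{\frac1q+\frac1{\widetilde q}-\frac12}\,\|H\nqp{\wtq}{1}.
\]
Finally one compares this with the right side of \eqref{est1}: writing $RR'=R^*R_*$ and using the identities $\frac{2n-1}4-\frac{n-1}2=\frac14$ and $\frac{2n-1}4-\frac{n-1}2-\frac12=-\frac14$, one checks in both sub-cases $R\le R'$ and $R\ge R'$ that the quotient of the displayed bound by the right side of \eqref{est1} equals $(R_*/R^*)$ raised to a nonnegative power, hence is $\le1$; this is precisely \eqref{est1}.

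The step I expect to be most delicate is the non-stationary–phase estimate for $G_\tau$: only $C^4$ control of $\varpi$ is available, so one can integrate by parts only a few times — three is enough here, and the resulting $|\tau|^{-3}$ and $(R^*)^{-3}$ decay is comfortably stronger than the $(R^*)^{-1/2}$ actually needed and is what makes $g$ integrable. A secondary point requiring care is the bookkeeping turning the single power $(R^*)^{\frac1q+\frac1{\widetilde q}-\frac12}$ coming out of Young's inequality into the factor $R^{1/q}(R')^{1/\widetilde q}(R_*/R^*)^{\min(\frac14-\frac1q,\,\frac14-\frac1{\widetilde q})}$ appearing in \eqref{est1}.
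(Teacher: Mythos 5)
Your argument is correct and is essentially the paper's proof: the same reduction to well-separated scales via \eqref{inpest}, the same splitting of the kernel of $S_R S_{R'}^{\,\,*}$ into the region $|t-s|\sim R^*$ (van der Corput, giving the bound $\lambda_0^{-1/2}(R^*)^{-1/2}$) and its complement (three integrations by parts using only $C^4$ control of $\varpi$, giving negligible decay), followed by the same scale bookkeeping identifying $RR'=R^*R_*$. The only cosmetic difference is that you perform the time integration by Young's inequality against the integrable majorant $g$, whereas the paper restricts the temporal supports to intervals of length $\sim R^*$ and applies H\"older; both yield the identical intermediate bound $\lambda_0^{-1/2}(RR')^{-\frac{n-1}{2}}(R^*)^{\frac1q+\frac1{\widetilde q}-\frac12}$.
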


\begin{proof}
By \eqref{inpest} we may assume that $R^*\ge 8R_*$. Note that
\begin{align*}
S_R S_{R'}^{\,\,*} H(t,r)=\iint K_{R,R'}(t-s,r,r')
[(r')^{n-1}H(s,r')]dsdr',
\end{align*}
where
\[K_{R,R'}(t,r,r')=(RR')^{-\frac{n-1}{2}}\widetilde\chi_R(r)\widetilde\chi_{R'}(r') \int e^{i(-t\varpi(\rho)\pm\, (r-r')\r)}\beta^2(\rho)d\rho.
\]
We first break the kernel $K_{R,R'}$ so that
\[K_{R,R'}(t,r,r')= K_1(t,r,r')+ K_2(t,r,r'),\]
where \[K_1(t,r,r')=\chi_{\{R^*/8\le |t|\le 8R^*\}}K_{R,R'}(t,r,r')
.\] Since $|\varpi'(\rho)|\lesssim 1$,
$|\frac{d}{d\rho}(-t\varpi(\rho)\pm\, (r-r')\r)|\ge  C\max(|t|, R^*)
$ if $|t|\le R^*/8$ or $|t|\ge 8R^*$. Hence by  integration
by parts (three times) we see that
 \[|K_2(t,r,r')|\le (R^*)^{-a}(1+t)^{-(3-a)}\] for any
$0 \le a \le 3$. So,
 the contribution from $K_2$ is negligible. It is now sufficient for \eqref{est1} to consider the operator
 \[
S_{R,R'} H(t,r)=\iint K_1(t-s,r,r') [(r')^{n-1}H(s,r')]dsdr'
\]
instead of $S_R S_{R'}^{\,\,*}$. Since $|\varpi''|\gtrsim \lambda_0$
and $K_1(\cdot, r,r')$ is supported in $[R^*/8, 8R^*]$, by the van
der Corput lemma it follows that
\[|K_1(t,r,r')|\le C(R^*\lambda_0)^{-\frac12}.\]
By the standard argument, obviously we may assume that the temporal
supports of  $S_{R,R'} H,$ $H$ are contained in an interval of
length $\sim R^*$. By H\"older's inequality and the above kernel
estimate we have for $1\le q\le 2\le \wtq\le \infty$
\begin{align*}
\|S_{R,R'} H\nqp{q}{\infty} &\le C(R^*)^{\frac1q+1-\frac1\wtq}
(RR')^{-\frac{2n-2}{4}}
(R^*\lambda_0)^{-\frac12}\|H\nqp{\wtq}{1}\nonumber
\\
&\le C\lambda_0^{-\frac12}R^{\frac1q-\frac{2n-1}4}
(R')^{\frac1{\widetilde q}-\frac{2n-1}4}
\Big(\frac{R_*}{R^*}\Big)^{\min(\frac{1}{4}-\frac1q,
\frac14-\frac1{\widetilde q})}\|H\nqp{\wtq}{1}\label{est1}.
\end{align*}
Hence we get the desired estimate \eqref{est1}.
\end{proof}

Now we interpolate \eqref{inpest}  and \eqref{est1} to get an
improvement on the estimate \eqref{inpest} when $R\not\sim R'$. In
particular, taking $p=\widetilde p=2$ in \eqref{inpest},  we have
\[
\| S_{R}S_{R'}^{\,\,*} H\nqp{q}{2} \lesssim C
R^{\frac1q}R'^{\frac1{\widetilde q}} \|H\nqp{\wtq}{2}
 \]
 provided that $2\le q,\widetilde q\le \infty$. Then we interpolate it with \eqref{est1}
 to get
\begin{align}\label{wtgain}
\|S_R S_{R'}^{\,\,*} H\rn \le &C\lambda_0^{\frac1{2p}-\frac1{2p'}}\min\big(\frac{R}{R'},\frac{R'}{R}\big)^{\epsilon}
\\ &\times
R^{\frac1q-\frac{2n-1}2(\frac12-\frac1p)}(R')^{\frac1{\widetilde
q}-\frac{2n-1}2(\frac12-\frac1{p})} \|H\rnmix{\wtq}{p'}\nonumber
\end{align}
for some $\epsilon=\epsilon(p,q,\widetilde q)>0$ provided that
$2<p\le \infty$ and $0\le \frac1q,\frac1{\widetilde
q}<\frac14+\frac1{2p}$. Clearly  we may assume that $\epsilon$
continuously depends on $\frac1p,\frac1q,\frac1{\widetilde q}$. So,
if $\Delta$ is a compact subset of
 $\{(\frac1p,\frac1q,\frac1{\widetilde q}): \frac12>\frac 1p\ge 0, \ 0\le \frac1q,\frac1{\widetilde q}<\frac14+\frac1{2p} \}$,
 there is a uniform lower bound $\epsilon_0=\epsilon_0(\Delta)$
such that $\epsilon(p, q,\widetilde q)\ge \epsilon_0>0$ if $(1/p,
1/q, 1/\widetilde q)\in \Delta$.

\subsubsection{Proof of \eqref{l22} for $2<p<\frac{2n}{n-1}$}
We firstly show \eqref{l22} for $p<\frac{2n}{n-1}$. The remaining
case will be handled differently. For \eqref{l22} it suffices to
show that
\begin{equation}
\label{eq2}\|\sum_{R, R'\ge 5\nu^\frac85} S_R S_{R'}^{\,\,*} H\rn\le
C\lambda_0^{(\frac1{2p}-\frac1{2p'})} \|H\pprn\end{equation}
provided that $\frac1q = \frac{2n-1}{2}(\frac12- \frac1p)$, $q\neq
2$, $p\neq 2$. Fix $p,q$ such that $\frac1q =
\frac{2n-1}{2}(\frac12- \frac1p)$, $2<p<\frac{2n}{n-1}$.  We write
 \[\sum_{R,R'\ge 5\nu^\frac85} S_R S_{R'}^{\,\,*} H=\sum_{k=-\infty}^\infty \Big(\sum_{R,R'\ge 5\nu^\frac85; \frac{R}{R'}=2^k} S_R S_{R'}^{\,\,*} H\Big).\]
 By  \eqref{wtgain} each of summand in the inner summation satisfies
  \begin{equation}\label{prev}
  \|S_R S_{R'}^{\,\,*} H\nqp{s}{p}\le C\lambda_0^{\frac1{2p}-\frac1{2p'}}
  (R'2^{k/2})^{(\frac1s+\frac1{\w s}-\frac{2n-1}{2}(1-\frac2p))} 2^{-\epsilon |k|}  \|H\nqp{\w s'}{p'}
  \end{equation}
for some $\epsilon>0$ if $0\le \frac1s,\frac1{\widetilde
s}<\frac14+\frac1{2p}$.

We now use a summation argument due to Bourgain \cite{Bour}. (Also
see \cite{CSWW}  for a generalization.) For reader's convenience we
state a version which we need here. (See \cite{LS} for a simple
proof.)
\begin{lem}\label{summation}
Let $\varepsilon_1,\varepsilon_2>0$.  Let $A, B$ be Banach spaces
and $1\le r_1, r_2, s_1, s_2<\infty$.  Suppose that
$\{T_j\}_{j=-\infty}^\infty$ be a collection of operators satisfying
that $\|T_j F\|_{L^{s_1}(B)}\le C M_12^{\varepsilon_1j}
\|F\|_{L^{r_1}(A)}$ and $\|T_j F\|_{L^{s_2}(B)}\le C
M_22^{-\varepsilon_2 j}  \|F\|_{L^{r_2}(A)}$. Then
\[\|\sum T_j F\|_{L^{s,\infty}(B)}\le CM_1^{\theta}M_2^{1-\theta} F\|_{L^{r,1}(A)},\]
where $\theta=\varepsilon_2/(\varepsilon_1+\varepsilon_2)$,
$1/r=\theta/{r_1}+(1-\theta)/{r_2}$ and
$1/s=\theta/{s_1}+(1-\theta)/{s_2}$.   Here $L^{r,a}$ denotes the
Lorentz space.
\end{lem}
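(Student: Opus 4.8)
The plan is to reduce the estimate to a restricted-type bound on characteristic functions and then to prove that bound by splitting the sum over $j$ at an optimally chosen threshold. First I would note that, since $1/r=\theta/r_1+(1-\theta)/r_2$ and $1/s=\theta/s_1+(1-\theta)/s_2$ are convex combinations of numbers in $(0,1]$ with $\theta\in(0,1)$, both $r$ and $s$ lie in $[1,\infty)$, so $L^{r,1}(A)$ and $L^{s,\infty}(B)$ are honest Lorentz spaces. Given $F\in L^{r,1}(A)$, I would decompose $F=\sum_{m\in\Z}F_m$ with $F_m=F\,\chi_{E_m}$, $E_m=\{x:2^m\le\|F(x)\|_A<2^{m+1}\}$, so that $\|F_m(x)\|_A\le 2^{m+1}\chi_{E_m}(x)$ and $\sum_m 2^m|E_m|^{1/r}\sim\|F\|_{L^{r,1}(A)}$ (this uses $r\ge1$). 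Since $L^{s,\infty}(B)$ is equivalent to a Banach norm when $1<s<\infty$, the pieces can be summed, and the whole statement follows once one has the restricted-type inequality $\|\sum_j T_jF\|_{L^{s,\infty}(B)}\lesssim M_1^\theta M_2^{1-\theta}|E|^{1/r}$ whenever $\|F(x)\|_A\le\chi_E(x)$ and $|E|<\infty$. (When $s=1$ one necessarily has $s_1=s_2=1$, and this case is handled by the same scheme with a routine modification, cf. \cite{LS}.)

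To prove the restricted-type bound I would fix such an $F$, set $\mu=|E|$, and fix $\alpha>0$ and an integer $j_0$ to be chosen. Splitting $\sum_j T_jF=\sum_{j\le j_0}T_jF+\sum_{j>j_0}T_jF$ (both series converge, the first in $L^{s_1}(B)$ and the second in $L^{s_2}(B)$, by the hypotheses and the geometric estimates below), Minkowski's inequality in $L^{s_1}$ (valid since $s_1\ge1$), the first hypothesis, $\|F\|_{L^{r_1}(A)}\le\mu^{1/r_1}$, and $\sum_{j\le j_0}2^{\varepsilon_1 j}\lesssim 2^{\varepsilon_1 j_0}$ give $\|\sum_{j\le j_0}T_jF\|_{L^{s_1}(B)}\lesssim M_1 2^{\varepsilon_1 j_0}\mu^{1/r_1}$, and symmetrically $\|\sum_{j>j_0}T_jF\|_{L^{s_2}(B)}\lesssim M_2 2^{-\varepsilon_2 j_0}\mu^{1/r_2}$. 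Chebyshev's inequality then yields $|\{\|\sum_j T_jF\|_B>\alpha\}|\lesssim(M_12^{\varepsilon_1 j_0}\mu^{1/r_1}/\alpha)^{s_1}+(M_22^{-\varepsilon_2 j_0}\mu^{1/r_2}/\alpha)^{s_2}$. I would then choose $j_0$ (first as a real number, then rounded to an integer at the cost of a fixed factor) so that $M_12^{\varepsilon_1 j_0}\mu^{1/r_1}/\alpha=(M_1^\theta M_2^{1-\theta}\mu^{1/r}/\alpha)^{s/s_1}$; the content of the hypotheses is exactly that with this $j_0$ one automatically also has $M_22^{-\varepsilon_2 j_0}\mu^{1/r_2}/\alpha=(M_1^\theta M_2^{1-\theta}\mu^{1/r}/\alpha)^{s/s_2}$. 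Indeed, writing $Y=\theta\log M_1+(1-\theta)\log M_2+r^{-1}\log\mu-\log\alpha$, the requirement that a single balancing $j_0$ serve both terms reduces, after clearing $\varepsilon_1,\varepsilon_2$ and inserting $\theta=\varepsilon_2/(\varepsilon_1+\varepsilon_2)$ together with $1/r=\theta/r_1+(1-\theta)/r_2$ and $1/s=\theta/s_1+(1-\theta)/s_2$, to the tautology $Y=Y$. Hence both terms are $\lesssim(M_1^\theta M_2^{1-\theta}\mu^{1/r}/\alpha)^s$, which is precisely the restricted-type bound, and combining with the first step proves the lemma.

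The only genuinely nontrivial point is this balancing step, and it is the one place where all three defining relations for $\theta$, $r$ and $s$ get used; everything else is Minkowski's inequality, Chebyshev's inequality, and summation of geometric series. The remaining technical chores — the equivalence $\sum_m 2^m|E_m|^{1/r}\sim\|F\|_{L^{r,1}}$, the triangle inequality for an equivalent norm on $L^{s,\infty}(B)$ when $s>1$, and the degenerate case $s=1$ — are standard Lorentz-space bookkeeping and I do not expect them to cause real trouble.
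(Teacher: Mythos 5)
The paper itself gives no proof of this lemma (it is quoted from Bourgain, with \cite{LS} cited for a simple proof), and your argument is precisely that standard proof: a restricted weak-type bound obtained by splitting the $j$-sum at an $\alpha$-dependent threshold $j_0$ and applying Chebyshev to the $L^{s_1}$ and $L^{s_2}$ pieces, then upgraded to $L^{r,1}(A)\to L^{s,\infty}(B)$ via the atomic decomposition of $L^{r,1}$ and the normability of $L^{s,\infty}$ for $s>1$. Your balancing step does check out (with $\theta=\varepsilon_2/(\varepsilon_1+\varepsilon_2)$ the two Chebyshev terms are simultaneously equal to $(M_1^\theta M_2^{1-\theta}|E|^{1/r}/\alpha)^s$), and the only point you defer, $s=1$, is genuinely routine since it forces $s_1=s_2=1$, where balancing the two hypotheses level by level in $m$ already gives a strong $L^1$ bound without any weak-type argument; so the proposal is correct and matches the cited proof's approach.
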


Let us set
\[I_p=\Big \{(\frac1s, \frac1{\w s}): \frac1s+\frac1{ \w s}= \frac{2n-1}{2}(1-\frac2p) ,
\ 0< \frac1s, \ \frac1{\widetilde s}<\frac14+\frac1{2p}\Big\}.
\]
The open line segment  $I_p$  is not empty as long as $
\frac{2n-1}{2}(1-\frac2p) < \frac12+\frac1{p} $ (equivalently
$p<\frac{2n}  {n-1}$). By applying Lemma \ref{summation} with
\eqref{prev}, we get for $2<p< \frac{2n}  {n-1}$
\begin{equation}\label{weak1}
\| \sum_{R,R'\ge 5\nu^\frac85;\frac{R}{R'}= 2^k} S_R S_{R'}^{\,\,*}
H \nqp{s,\infty}{p}\le C2^{-\epsilon |k|} \|H\nqp{\w s',1}{p'}
\end{equation}
provided  $(\frac1s,\frac1{\widetilde s})\in  I_p$. Note that the
exponent of $(R'2^{k/2})$ is equal to zero if $\frac1s+\frac1{ \w
s}= \frac{2n-1}{2}(1-\frac2p)$. Since $\w s'<2 < s$, by real
interpolation among the estimates \eqref{weak1} for
$(\frac1s,\frac1{\widetilde s})\in I_p$, they  can
be strengthened to strong type. %(This is possible because there is a small room for the range of $s, \w s'$ and $\w s'<2 < s$. )
Hence, if $2<p<\frac{2n}{n-1}$ and $(\frac1s,\frac1{\widetilde
s})\in I_p$, then  we have
\begin{equation}\label{strong}
\| \sum_{R,R'\ge 5\nu^\frac85;\frac{R}{R'}= 2^k} S_R S_{R'}^{\,\,*}
H \nqp{s}{p}\le C\lambda_0^{\frac1{2p}-\frac1{2p'}}2^{-\epsilon |k|}
\|H\nqp{\w s'}{p'}.
\end{equation}
So, for $2<p<\frac{2n}{n-1}$ and $(\frac1s,\frac1{\widetilde s})\in
I_p$ we get
\begin{align*}
 \Big\|\sum_{k=-\infty}^\infty\Big(\sum_{R,R'\ge 5\nu^\frac85; \frac{R}{R'}=2^k} &S_{R}S^{\,\, *}_{R'} H\Big)\Big{\nqp{s}{p}}\le
C\lambda_0^{\frac1{2p}-\frac1{2p'}}\|H\nqp{\w s'}{p'}.
\end{align*}
   In particular, if we take $s =\w s$ $ (=q)$, we get the desired
   estimate \eqref{eq2} for $p<\frac{2n}{n-1}$.

\subsubsection{Proof \eqref{l22} for $\frac{2n}{n-1}\le p<\frac{2(2n-1)}{2n-3}$ }
After squaring the left hand side of \eqref{l22}, we rearrange it so
that
 \[\sum_{R,R'\ge 5\nu^\frac85} \inp{S_{R}^{\,\,*} H}{ S_{R'}^{\,\,*} H}=\sum_{k=-\infty}^\infty \Big(\sum_{R,R'\ge 5\nu^\frac85; \frac{R}{R'}=2^k} \inp{S_{R}^{\,\,*} H}{ S_{R'}^{\,\,*} H}\Big).\]
Hence the desired estimate \eqref{l22} follows if we show that  for
$2<p<\frac{2(2n-1)}{2n-3}$
 \begin{equation}
 \label{decompkk}
| \sum_{R,R'\ge 5\nu^\frac85; \frac{R}{R'}=2^k} \inp{S_{R}^{\,\,*}
H}{ S_{R'}^{\,\,*} G}|\le C 2^{-\epsilon |k|}
\lambda_0^{\frac1p-\frac12}\|H\rnmix{q'}{p'}\|G\rnmix{q'}{p'}.
 \end{equation}
From \eqref{strong} we already established this inequality for
$2<p<\frac{2n}{n-1}$ and $\frac1q=\frac{2n-1}{2} (\frac12-\frac1p).$
 To get \eqref{decompkk} for $\frac{2n}{n-1}\le p<\frac{2(2n-1)}{2n-3}$ it is sufficient to show that
  \begin{equation}
 \label{decompk}
 \Big|\sum_{R,R'\ge 5\nu^\frac85; \frac{R}{R'}=2^k} \inp{S_{R}^{\,\,*} H}{ S_{R'}^{\,\,*} G}\Big|\le C \lambda_0^{\frac{2n-3}{4n-2}-\frac12} \|H\rnmix{2}{\frac{4n-2}{2n+1}}\|G\rnmix{2}{\frac{4n-2}{2n+1}}.
 \end{equation}
In fact,  interpolating this with \eqref{decompkk} for
$2<p<\frac{2n}{n-1}$ we get \eqref{decompkk} for
$2<p<\frac{4n-2}{2n-3}$.

To show \eqref{decompk} we adopt bilinear interpolation argument in
\cite{kt}, which was used to show the endpoint Strichartz estimate.
Let us denote by $\ell_r^s$ the pace of sequences $\{Z_R\}_{R:
dyadic}$ with norm
\[ \|\{Z_R\}\|_{\ell_r^s}=\begin{cases} &\Big(\sum_{R: dyadic}  |R^s Z_R|^r \Big)^\frac1r \text{ if } r\neq \infty, \\
 &\quad\sup_{R: dyadic } |R^s Z_R|  \quad\text{ if } r= \infty.
                                       \end{cases}    \]
                                                       We will use the fact (see  Theorem 5.6.2 in  \cite{BL}) that
 if $0<q_0, q_1\le \infty$ and $s_0\neq s_1$, then for $q\le \infty$,
 \begin{equation} \label{interseq}(\ell_{q_0}^{s_0}, \ell_{q_1}^{s_1})_{\theta, q}=\ell_{q}^{s},\end{equation}
 where $s=(1-\theta)s_0+\theta s_1$. And we also recall the following  fact on real interpolation which is due to Lions and
 Peetre  \cite{lp} (also see \cite{Tri}, section 1.18.4):  Let $A_0$, $A_1$ be Banach spaces. If $1\le p_0, p_1<\infty$, $0<\theta<1$, and $\frac1{p}=\frac{1-\theta}{p_0}+\frac\theta{p_1}$, then
 \begin{equation} \label{intelp}( L^{p_0}(A_0), L^{p_1}(A_1))_{\theta, p} =L^p((A_0,A_1)_{\theta,p}).\end{equation}
Here $(A_0,A_1)_{\theta, r}$ denotes the real interpolation space.

 We now consider the bilinear operator which is defined by
\[B_k(H,G)_{R'}=
\begin{cases}
\inp{S_{R}^{\,\,*} H}{ S_{R'}^{\,\,*} G} \, &\text{ if } R=2^kR',\, R,R'\ge 5\nu^\frac85, \\
\qquad \quad 0  & \text{\,\, otherwise. }
\end{cases}\]
By \eqref{inpest} and duality,  taking $q=\widetilde q=2$
particularly,  we have for $2\le p,\widetilde p \le \infty$
\begin{align}
\label{inpest2}  |\inp{S_{2^k R}^{\,\,*} H}{ S_{R}^{\,\,*}
G}|\lesssim C\lambda_0^{\frac1{2p}-\frac1{2\widetilde p'}}
2^{k(\frac{2n-1}2\frac1p-\frac{2n-3}4)} R^{\frac{2n-1}2(\frac
1p+\frac1{\widetilde p})-\frac{2n-3}2}\|H\rnmix{2}{p'}
\|G\rnmix{2}{\widetilde p'}.
\end{align}
Let us set
 \[\beta(p,\widetilde p)=\frac{2n-3}2-\frac{2n-1}2(\frac 1p+\frac1{\widetilde p}).\]
Then \eqref{inpest2} implies that for $2\le p, \w p \le \infty$
\begin{align}\label{bibi}
B_k: L^2_t\mathfrak L_r^{p'}\times L^2_t\mathfrak L_r^{\w p'} \to
\ell_\infty^{\beta(p,\w p)}
\end{align}
is bounded with bound $C\lambda_0^{\frac1{2p}-\frac1{2\widetilde p'}}2^{k(\frac{2n-1}2\frac1p-\frac{2n-3}4)} $.

Now we apply the following interpolation lemma. See \cite{BL}
(exercise 5(b) in section 3.13).

\begin{lem}\label{interbii} Let $A_0,$ $A_1$, $B_0,$ $B_1$, $C_0,$ and $C_1$ be Banach spaces, and
$T$ be a bilinear operator such that
\begin{equation*}\begin{aligned}
&\|T(f,g)\|_{C_0}\le M_{0,0}\|f\|_{A_0}\|g\|_{B_0},\\
&\|T(f,g)\|_{C_1}\le M_{i,1-i}\|f\|_{A_i}\|g\|_{B_{1-i}}, \, i=0,1.
\end{aligned}
\end{equation*}
Then, if $0<\theta_a, \theta_b<1$, $\theta=\theta_a+\theta_b$, $1\le
u, v,r\le \infty$, and $1\le 1/u+1/v$,
\[ T: (A_0, A_1)_{\theta_a, up}\times (B_0, B_1)_{\theta_b, vq}\to  (C_0,C_1)_{\theta, r}\]
is bounded with norm $\lesssim
M_{0,0}^{1-\theta_a-\theta_b}M_{1,0}^{\theta_a}M_{0,1}^{\theta_b}$.
\end{lem}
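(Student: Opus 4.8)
The plan is to prove Lemma \ref{interbii} by running the discrete $J$-method of real interpolation simultaneously in both variables. First I would reduce to the normalized case $M_{0,0}=M_{1,0}=M_{0,1}=1$. Replacing the norms of $A_1$, $B_1$, $C_0$, $C_1$ by suitable scalar multiples changes the three hypothesis constants and also the norms of $(A_0,A_1)_{\theta_a,up}$, $(B_0,B_1)_{\theta_b,vq}$, $(C_0,C_1)_{\theta,r}$ by explicit powers of the rescaling parameters (here one of the four parameters is redundant, leaving three to match the three constraints); a short computation using $\theta=\theta_a+\theta_b$ shows that these powers recombine exactly into the asserted factor $M_{0,0}^{1-\theta_a-\theta_b}M_{1,0}^{\theta_a}M_{0,1}^{\theta_b}$, so it suffices to establish the inequality with all constants equal to $1$.

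Next, for $f\in(A_0,A_1)_{\theta_a,up}$ and $g\in(B_0,B_1)_{\theta_b,vq}$ I would invoke the fundamental lemma of the real interpolation method to obtain representations $f=\sum_{\mu\in\mathbb{Z}}f_\mu$, $g=\sum_{\lambda\in\mathbb{Z}}g_\lambda$ with $f_\mu\in A_0\cap A_1$, $g_\lambda\in B_0\cap B_1$, and
\[ \big\|\{2^{-\theta_a\mu}j_\mu\}_\mu\big\|_{\ell^{up}}\lesssim\|f\|_{(A_0,A_1)_{\theta_a,up}},\qquad \big\|\{2^{-\theta_b\lambda}k_\lambda\}_\lambda\big\|_{\ell^{vq}}\lesssim\|g\|_{(B_0,B_1)_{\theta_b,vq}}, \]
where $j_\mu=\max(\|f_\mu\|_{A_0},2^{\mu}\|f_\mu\|_{A_1})$ and $k_\lambda=\max(\|g_\lambda\|_{B_0},2^{\lambda}\|g_\lambda\|_{B_1})$. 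Because each $f_\mu$ lies in $A_0\cap A_1$ and each $g_\lambda$ in $B_0\cap B_1$, all three hypotheses apply to every bilinear piece $T(f_\mu,g_\lambda)$; this is the point where the off-diagonal structure is accommodated, since one cannot simply freeze a single argument. They give $\|T(f_\mu,g_\lambda)\|_{C_0}\le j_\mu k_\lambda$ and, by taking the geometric mean of the two $C_1$-bounds with weights $\theta_a/\theta$ and $\theta_b/\theta$, $\|T(f_\mu,g_\lambda)\|_{C_1}\le 2^{-(\theta_a\mu+\theta_b\lambda)/\theta}\,j_\mu k_\lambda$.

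I would then reassemble $T(f,g)=\sum_{\sigma\in\mathbb{Z}}u_\sigma$ with $u_\sigma=\sum_{(\mu,\lambda):\,\sigma(\mu,\lambda)=\sigma}T(f_\mu,g_\lambda)$, where $\sigma(\mu,\lambda)=[(\theta_a\mu+\theta_b\lambda)/\theta]$; this $J$-index is exactly the one that balances the $C_0$- and $C_1$-bounds, so that $2^{-\theta\sigma}J(2^\sigma;u_\sigma;C_0,C_1)\lesssim\sum_{\sigma(\mu,\lambda)=\sigma}(2^{-\theta_a\mu}j_\mu)(2^{-\theta_b\lambda}k_\lambda)$. Writing $x_\mu=2^{-\theta_a\mu}j_\mu\in\ell^{up}$, $y_\lambda=2^{-\theta_b\lambda}k_\lambda\in\ell^{vq}$, and using the $J$-method description of $(C_0,C_1)_{\theta,r}$, the whole matter comes down to the purely discrete inequality $\big\|\{\sum_{\sigma(\mu,\lambda)=\sigma}x_\mu y_\lambda\}_\sigma\big\|_{\ell^{r}}\lesssim\|x\|_{\ell^{up}}\|y\|_{\ell^{vq}}$. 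For each fixed $\sigma$ the relation $\sigma(\mu,\lambda)=\sigma$ pins down $\lambda$ in terms of $\mu$ up to bounded multiplicity (and symmetrically), so, after an affine change of indices, the left side is a convolution of $x$ and $y$, and the hypothesis $1\le 1/u+1/v$ is precisely the Young-type condition that makes it bounded.

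The main obstacle is exactly this last step: one has to verify carefully the bounded-overlap properties of the level sets $\{\sigma(\mu,\lambda)=\sigma\}$ and check that the resulting convolution-type estimate really does hold with the target exponent $r$ under the stated index relations — if $r$ sits at an endpoint this requires first a weak-type ($\ell^{r,\infty}$) bound and then a real-interpolation upgrade to strong type, in the spirit of Lemma \ref{summation}. Once that is in hand, the passage back through the $J$-method equivalence and the un-normalization of the constants $M_{i,j}$ are routine.
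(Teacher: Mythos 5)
The paper does not actually prove this lemma; it is quoted from Bergh--L\"ofstr\"om (Exercise 5(b) of Section 3.13), so your attempt has to be judged on its own. Your opening moves are the standard ones for that exercise: normalization of the constants $M_{i,j}$ and $J$-method decompositions $f=\sum_\mu f_\mu$, $g=\sum_\lambda g_\lambda$ in both variables. The fatal step is where you replace the two $C_1$-bounds $\|T(f_\mu,g_\lambda)\|_{C_1}\le \min\bigl(2^{-\mu},2^{-\lambda}\bigr)j_\mu k_\lambda$ by their geometric mean $2^{-(\theta_a\mu+\theta_b\lambda)/\theta}j_\mu k_\lambda$. That throws away exactly the off-diagonal gain that makes the three-estimate hypothesis stronger than the classical Lions--Peetre bilinear theorem. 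After this loss, your reduced discrete inequality $\bigl\|\{\sum_{\sigma(\mu,\lambda)=\sigma}x_\mu y_\lambda\}_\sigma\bigr\|_{\ell^r}\lesssim\|x\|_{\ell^{s_1}}\|y\|_{\ell^{s_2}}$ is a genuine Young-type convolution inequality, which requires $1/s_1+1/s_2\ge 1+1/r$, not the stated $1\le 1/u+1/v$. In the only case the paper uses ($r=1$, $u=v=2$, so $s_1=s_2=2$) it is simply false: summing over $\sigma$ gives $\|x\|_{\ell^1}\|y\|_{\ell^1}$, which is not controlled by $\|x\|_{\ell^2}\|y\|_{\ell^2}$; even the weak-type $\ell^{1,\infty}$ substitute fails (take $x_\mu=y_\mu=N^{-1/2}$ for $1\le\mu\le N$), so the ``weak type plus upgrade in the spirit of Lemma \ref{summation}'' escape route you propose cannot close the gap — there is no second estimate with an opposite-sign gain to interpolate against.

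The repair is to keep the minimum. Estimate each single piece directly in the target interpolation space via $\|c\|_{(C_0,C_1)_{\theta,r}}\lesssim\|c\|_{C_0}^{1-\theta}\|c\|_{C_1}^{\theta}$ (valid for every $r\ge1$, including $r=1$), which gives $\|T(f_\mu,g_\lambda)\|_{(C_0,C_1)_{\theta,r}}\lesssim j_\mu k_\lambda\min\bigl(2^{-\mu},2^{-\lambda}\bigr)^{\theta} = x_\mu y_\lambda\, 2^{-\theta_b(\mu-\lambda)_+-\theta_a(\lambda-\mu)_+}$ with $x_\mu=2^{-\theta_a\mu}j_\mu$, $y_\lambda=2^{-\theta_b\lambda}k_\lambda$: genuine geometric decay $2^{-c|\mu-\lambda|}$ off the diagonal, because $\theta=\theta_a+\theta_b$. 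Then sum along each off-diagonal $\mu-\lambda=k$ by H\"older — this is where the hypothesis $1\le 1/u+1/v$ really enters, as a H\"older condition rather than a Young condition — and sum the geometric series in $k$. This yields boundedness into $(C_0,C_1)_{\theta,1}\subset(C_0,C_1)_{\theta,r}$, which is the form of the lemma the paper actually invokes in the proof of \eqref{decompk}; note that this abstract argument is the same off-diagonal-decay-plus-H\"older mechanism that the paper implements concretely with the $2^{-\epsilon|k|}$ gains of Lemma \ref{interaction}. Your minor unaddressed point — why $T(f,g)=\sum_{\mu,\lambda}T(f_\mu,g_\lambda)$, given that $T$ is not assumed bounded on $A_1\times B_1$ — deserves a sentence as well, but it is routine compared with the main gap above.
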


 We choose $p_0,
$ $p_1\in [2,\infty)$ such that $p_0<p_1$ and  $\beta(p_0,p_1)<
\beta(p_0,p_0)$. Obviously such choices are always possible. We now
consider $A_i= L^2_t\mathfrak L_r^{p_i'},$ $B_i=L^2_t\mathfrak
L_r^{p_i'},$ $i=0,1,$ and $C_0=\ell_\infty^{\beta(p_0,p_0)},$ $
C_1=\ell_\infty^{\beta(p_0,p_1)}.$ Then by \eqref{bibi} $T=B_k$
satisfies the assumptions of Lemma \ref{interbii} with
\begin{align*} M_{0,0}= C\lambda_0^{\frac1{2p_0}-\frac1{2
p'_0}}&2^{k(\frac{2n-1}2\frac1{p_0}-\frac{2n-3}4)},\,\,\, M_{1,0}=
C\lambda_0^{\frac1{2p_1}-\frac1{2
p'_0}}2^{k(\frac{2n-1}2\frac1{p_1}-\frac{2n-3}4)},\\
&M_{0,1}= C\lambda_0^{\frac1{2p_0}-\frac1{2
p'_1}}2^{k(\frac{2n-1}2\frac1{p_0}-\frac{2n-3}4)}.
\end{align*}
By setting $r=1,$ $u=v=2$, we apply Lemma \ref{interbii}. Then it
follows that if $0<\theta=\theta_a+\theta_b<1,$
\[B_k: ( L^2_t\mathfrak L_r^{p_0'}, L^2_t \mathfrak L_r^{p_1'} )_{\theta_a,2}
\times( L^2_t\mathfrak L_r^{p_0'}, L^2_t\mathfrak L_r^{p_1'}
)_{\theta_b,2} \to  (\ell_\infty^{\beta(p_0,p_0)},
\ell_\infty^{\beta(p_0,p_1)})_{\theta, 1}
\]
with bound $C\lambda_0^{\frac12(\frac{2-\theta_a-\theta_b}{p_0} +
\frac{\theta_a+\theta_b}{p_1}-1)}2^{k(\frac{2n-1}2(\frac{1-\theta_a}{p_0}+\frac{\theta_a}{p_1})-\frac{2n-3}4)}
$. Here $\mathfrak L_r^{p,r}$ is the Lorentz space defined with
measure $r^{n-1} dr$. By setting
\[\Big(\frac1p,\frac1{\widetilde p}\Big)=(1-\theta_a-\theta_b)\Big(\frac1{p_0},\frac1{p_0}\Big)
+\theta_a\Big(\frac1{p_1},\frac1{p_0}\Big)
+\theta_b\Big(\frac1{p_0},\frac1{p_1}\Big)\] and by \eqref{interseq}
and \eqref{intelp}, we now have for $p_1<p,\w p<p_0$ satisfying
$1/p+1/{\w p}<1/p_0+1/p_1$
\[B_k:  L^2_t\mathfrak L_r^{p',2}  \times L^2_t\mathfrak L_r^{\w p',2}  \to  \ell_1^{\beta(p, \w p)}\]
with bound $C\lambda_0^{\frac1{2p}-\frac1{2\widetilde
p'}}2^{k(\frac{2n-1}2\frac1p-\frac{2n-3}4)} $.  Considering all the
possible choices of $p_0,$ $p_1$,  we see that this is also valid
for all $(p,\w p)$ satisfying $2<p,\w p<\infty$. Hence, in
particular, for $2<p,\w p<\infty$ satisfying
$\frac{2n-3}2-\frac{2n-1}2(\frac 1p+\frac1{\widetilde p})=0 $ $(
\beta(p,\widetilde p)=0)$ we have
\[| \sum_{ R,R'\ge 5\nu^\frac85; \frac{R}{R'}=2^k}
\inp{S_{R}^{\,\,*} H}{ S_{R'}^{\,\,*} G}|\le C
\lambda_0^{\frac1{2p}-\frac1{2\widetilde
p'}}2^{k(\frac{2n-1}2\frac1p-\frac{2n-3}4)}
\|H\rnmix{2}{p',2}\|G\rnmix{2}{\w p',2}.
\]
Taking $p=\w p $ $ (=\frac{2(2n-1)}{2n-3}) $,  we get the desired
\eqref{decompk} since $L^{2}_{t}\mathfrak L^{ p'}_{r}\subset
L^{2}_{t}\mathfrak L^{ p',2}_{r} $. This completes the proof.

\subsection{Proof of Theorem \ref{opt2}}
Theorem \ref{opt2} can be proven similarly as Theorem \ref{opt}.
Once we have Lemma \ref{linear-2}, we can routinely follow the
arguments for the proof of Theorem \ref{opt}. The only difference
comes from the additional assumption \eqref{simsim} (see
\eqref{fflocal} and \eqref{www}) by which we have $\lambda_0\sim 1$
at \eqref{www}. Hence we do not have any loss in $\lambda_0$ when
applying Lemma \ref{linear-2}. Then the  remaining is almost
identical with the proof of Theorem \ref{opt}.  We omit the detail.

\subsection{Remark for the wave equation}\label{wavet}
In \cite{ster}, the estimate \eqref{angle} was proven for
$\omega(\rho)=\rho$, $s=n(\frac12-\frac1p)-\frac1q$, $\alpha
> \frac2q -(n-1)(\frac12-\frac{1}{p})$ when $n\ge 3$ provided that
\begin{equation}
\frac{n-1}2(\frac12-\frac1p)<\frac1q < (n-1)(\frac12-\frac1p).
\end{equation}
It was shown by Knapp's example that the estimate fails if $
\frac1q>(n-1)(\frac12-\frac1p)$. The example in \cite{ster} also
shows that $\alpha \ge  \frac2q -(n-1)(\frac12-\frac{1}{p})$ is
necessary for \eqref{angle}.

Let us set
\[\mathcal W^\nu  h(t,r) = r^{-\frac{n-2}2}\int e^{-it\rho} J_\nu(r\rho)\rho^\frac n2 \beta(\rho) h(\rho) d\rho.\]
Similarly as before, it is easy to see that $\| \chi_{\{r<
R_0\}}\mathcal W^\nu h\rn \le C\|h\|_2.$ (See \eqref{lolo}.) Hence,
by \eqref{wave} it follows that  for $2\le p,q \le \infty$ and
$\frac1q < (n-1)(\frac12-\frac1p)$
\begin{equation}\label{reducedw}
\|\mathcal W^\nu h\rn\le C\|h\|_{2}.
\end{equation}
Using Lemma \ref{summation}, it seems possible to get some weak type
estimates for  $\mathcal W^\nu$ along the sharp line
$\frac1q=(n-1)(\frac12-\frac1p)$, $q>2$ but the strong type endpoint
estimates are not possible by the method in this paper because
$\omega''=0$. By the argument for the proof of Lemma \ref{flocal},
the following is easy to show.

\begin{lem}\label{flocalw} Let $\omega(\rho)=\rho$ and  $2 \le p<\infty$, $2\le q \le \infty$, $\gamma\ge 0$. Suppose that \eqref{reducedw} holds for
$\nu = \nu(k) = \frac{n-2+2k}{2}$, $k\ge 0$. Then the solution $u$
to \eqref{linear eqn} satisfies \eqref{angle} with
$s=n(\frac12-\frac1r)-\frac1q$ provided that $\alpha
=(n-1)(\frac12-\frac1p)$.
\end{lem}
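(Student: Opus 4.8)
The plan is to run the argument in the proof of Lemma~\ref{flocal} essentially verbatim, the situation being in fact simpler here since $\omega(\rho)=\rho$ is homogeneous of degree one with $\omega''\equiv0$: neither the parameter $\lambda_0$ nor the weight $\mathcal D_\omega^{s_1,s_2}$ (whose factor $|\omega''|^{s_2}$ would be ill defined) intervenes, and the admissible range is simply the one for which \eqref{reducedw} is assumed, namely $\frac1q<(n-1)(\frac12-\frac1p)$. First I would apply the Littlewood-Paley decomposition in the radial frequency together with Minkowski's inequality (legitimate since $p,q\ge2$) to get $\|e^{-it|\nabla|}\varphi\|_{L^q_tL^p_x}\lesssim(\sum_N\|e^{-it|\nabla|}P_N\varphi\|_{L^q_tL^p_x}^2)^{1/2}$, with $P_N$ as in the proof of Lemma~\ref{flocal}. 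Then, for fixed $t$ and $r$, applying the Sobolev embedding $H^{(n-1)(\frac12-\frac1p)}(S^{n-1})\hookrightarrow L^p(S^{n-1})$ on the sphere $\{|x|=r\}$ and using that $D_\sigma$ commutes with $P_N$ and $e^{-it|\nabla|}$, one bounds this by $(\sum_N\|e^{-it|\nabla|}P_ND_\sigma^\alpha\varphi\srmix{q}{p}^2)^{1/2}$ with $\alpha=(n-1)(\frac12-\frac1p)$. By orthogonality of the $P_N$ it therefore suffices to prove the single-block estimate $\|e^{-it|\nabla|}P_Nf\srmix{q}{p}\lesssim N^{s}\|P_Nf\|_{\mathfrak L^2_rL^2_\sigma}$ with $s=n(\frac12-\frac1p)-\frac1q$ for each dyadic $N$, since summing the squares then reproduces the bound by $\||\nabla|^sD_\sigma^\alpha\varphi\|_{L^2}=\|\varphi\|_{\dot H^s_rH^\alpha_\sigma}$.

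By the rescaling $\xi\mapsto N\xi$, $x\mapsto N^{-1}x$, $t\mapsto N^{-1}t$ (for $\omega(\rho)=\rho$ one has $N\omega'(N)=N$, so the phase $e^{-it\rho}$ is unchanged), this single-block estimate is equivalent to $\|Tf\srmix{q}{p}\lesssim\|f\|_{\mathfrak L^2_rL^2_\sigma}$ for $f$ supported in $\{\frac12\le|\xi|\le2\}$, where $Tf(t,x)=\int e^{i(x\cdot\xi-t|\xi|)}\beta(|\xi|)f(\xi)\,d\xi$. Expanding $f$ in spherical harmonics, $f(\rho\sigma)=\sum_{k\ge0}\sum_{1\le l\le d(k)}a_k^l(\rho)Y_k^l(\sigma)$, and using the identity $\widehat{Y_k^l}(\rho\sigma)=c_{n,k}\rho^{-\frac{n-2}2}J_{\nu(k)}(\rho)Y_k^l(\sigma)$ with $\nu(k)=\frac{n-2+2k}2$, one gets, exactly as in \eqref{express} with $\varpi(\rho)=\rho$,
\[
Tf(t,x)=\sum_{k,l}c_{n,k}\,\mathcal W^{\nu(k)}(a_k^l)(t,r)\,Y_k^l(x/|x|),\qquad r=|x|.
\]

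Finally, orthonormality of $\{Y_k^l\}$ on $S^{n-1}$ and $|c_{n,k}|=(2\pi)^{n/2}$ give $\|Tf\srmix{q}{p}=(2\pi)^{n/2}\|(\sum_{k,l}|\mathcal W^{\nu(k)}(a_k^l)(t,r)|^2)^{1/2}\|_{L^q_t\mathfrak L^p_r}$, and, since $p,q\ge2$, a further application of Minkowski's inequality bounds this by $C(\sum_{k,l}\|\mathcal W^{\nu(k)}(a_k^l)\|_{L^q_t\mathfrak L^p_r}^2)^{1/2}$. At this point the hypothesis \eqref{reducedw}, which holds uniformly in $k$, gives $\|\mathcal W^{\nu(k)}(a_k^l)\|_{L^q_t\mathfrak L^p_r}\le C\|a_k^l\|_{L^2_{\rho\sim1}}$, so the last expression is $\le C(\sum_{k,l}\|a_k^l\|_{L^2_{\rho\sim1}}^2)^{1/2}=C\|f\|_{\mathfrak L^2_rL^2_\sigma}$ by Plancherel and orthogonality of spherical harmonics, which is the desired bound. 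Since \eqref{reducedw} carries no growth in $\nu$, the only angular regularity consumed is the $\alpha=(n-1)(\frac12-\frac1p)$ from the Sobolev embedding, as asserted. There is no genuine obstacle here: the only points requiring attention — tracking the weight $r^{n-1}\,dr$ through the Sobolev embedding on the sphere, and matching the homogeneity exponents under rescaling — are identical to, and lighter than, the corresponding steps in the proof of Lemma~\ref{flocal}.
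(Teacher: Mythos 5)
Your proposal is correct and follows exactly the route the paper intends: the paper proves Lemma \ref{flocalw} by invoking "the argument for the proof of Lemma \ref{flocal}," which is precisely what you carry out — Littlewood--Paley plus Minkowski, Sobolev embedding $H^{(n-1)(\frac12-\frac1p)}(S^{n-1})\hookrightarrow L^p(S^{n-1})$, rescaling to a unit frequency block (trivial here since $N\omega'(N)=N$), spherical harmonic expansion as in \eqref{express}, and the uniform-in-$\nu$ hypothesis \eqref{reducedw}. Your observations that $\lambda_0$ and the weight $\mathcal D_\omega^{s_1,s_2}$ play no role for $\omega(\rho)=\rho$, and that no angular regularity beyond the Sobolev loss is consumed, match the paper.
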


Hence we get \eqref{angle} for  $2 \le p<\infty$, $2\le q \le
\infty$  and $\frac1q<(n-1)(\frac12-\frac1p)$ provided $\sigma\ge
(n-1)(\frac12-\frac1p)$. Now note $ (n-1)(\frac12-\frac1p)=\frac2q
-(n-1)(\frac12-\frac{1}{p})$ if $ \frac1q=(n-1)(\frac12-\frac1p)$.
Hence interpolating these estimates with the usual Strichartz
estimates for the wave equation (along the sharp line
$\frac1q=\frac{n-1}2(\frac12-\frac1p)$) recovers the aforementioned
results in \cite{ster}.  This  also shows that if one can obtain
\eqref{reducedw} on the sharp line $\frac1q=(n-1)(\frac12-\frac1p)$,
then the optimal angular regularity ($\alpha = \frac2q
-(n-1)(\frac12-\frac{1}{p})$) for \eqref{angle} also follows.

\

\noindent{\bf  Acknowledgement.} We would like to thank Kenji
Nakanishi for a comment on the paper. Y. Cho  was supported by NRF
grant 2011-0005122. S. Lee was supported by NRF grant 2011-0001251
(Korea).

\end{document}